\documentclass[11pt,reqno]{amsart}

\usepackage[T1]{fontenc}
\usepackage{lmodern}
\usepackage{microtype}
\usepackage{mathtools,amssymb}
\usepackage{enumitem}
\usepackage{aliascnt}
\usepackage[hidelinks]{hyperref}
\hypersetup{
  pdftitle={Ideals, Well-orderable Families, and Baire category in Truss's Feferman-type model},
  pdfauthor={Jason Zesheng Chen},
  pdfsubject={Baire category and Truss model},
  pdfkeywords={axiom of choice, dependent choice, perfect set property, Cohen forcing, Baire category, Rothberger property, strong measure zero, Marczewski ideal, Sacks forcing}
}
\usepackage[nameinlink,noabbrev]{cleveref}

\allowdisplaybreaks
\setlist[enumerate,1]{label=(\arabic*),leftmargin=2.4em,itemsep=.25em,topsep=.4em}
\setlist[itemize]{leftmargin=1.8em,itemsep=.2em,topsep=.4em}

\newcommand{\R}{\mathbb R}
\newcommand{\WO}{\mathsf{WO}}
\newcommand{\SN}{\mathcal{SN}}
\newcommand{\UN}{\mathcal{UN}}
\newcommand{\Mar}{s^0}
\newcommand{\cM}{\mathcal M}
\newcommand{\cN}{\mathcal N}
\newcommand{\Add}{\operatorname{Add}}
\newcommand{\Fn}{\operatorname{Fn}}
\newcommand{\RO}{\operatorname{RO}}
\newcommand{\Ult}{\operatorname{Ult}}
\newcommand{\diam}{\operatorname{diam}}
\newcommand{\restr}{\mathbin{\upharpoonright}}
\newcommand{\BCTWO}{\mathsf{BCT}_{\WO}}
\newcommand{\RSWO}{\mathsf{RS}_{\WO}}
\newcommand{\TP}{\mathsf{TP}_{\WO}}
\newcommand{\meq}{=^{\infty}}

\theoremstyle{plain}
\newtheorem{theorem}{Theorem}[section]
\newaliascnt{proposition}{theorem}
\newtheorem{proposition}[proposition]{Proposition}
\aliascntresetthe{proposition}
\newaliascnt{lemma}{theorem}
\newtheorem{lemma}[lemma]{Lemma}
\aliascntresetthe{lemma}
\newaliascnt{corollary}{theorem}
\newtheorem{corollary}[corollary]{Corollary}
\aliascntresetthe{corollary}

\theoremstyle{definition}
\newaliascnt{definition}{theorem}
\newtheorem{definition}[definition]{Definition}
\aliascntresetthe{definition}

\theoremstyle{remark}
\newaliascnt{remark}{theorem}
\newtheorem{remark}[remark]{Remark}
\aliascntresetthe{remark}
\newaliascnt{question}{theorem}
\newtheorem{question}[question]{Question}
\aliascntresetthe{question}

\crefname{theorem}{theorem}{theorems}
\Crefname{theorem}{Theorem}{Theorems}
\crefname{proposition}{proposition}{propositions}
\Crefname{proposition}{Proposition}{Propositions}
\crefname{lemma}{lemma}{lemmas}
\Crefname{lemma}{Lemma}{Lemmas}
\crefname{corollary}{corollary}{corollaries}
\Crefname{corollary}{Corollary}{Corollaries}
\crefname{definition}{definition}{definitions}
\Crefname{definition}{Definition}{Definitions}
\crefname{remark}{remark}{remarks}
\Crefname{remark}{Remark}{Remarks}
\crefname{question}{question}{questions}
\Crefname{question}{Question}{Questions}

\title[Baire Category in Truss Model]
{Ideals, well-orderable families, and Baire category in Truss's Feferman-type model}

\author{Jason Zesheng Chen}
\subjclass[2020]{Primary 03E25, 03E35; Secondary 03E40, 28A05}
\keywords{axiom of choice, dependent choice, perfect set property, Cohen forcing,
Baire category, Rothberger property, strong measure zero, universally null set,
Marczewski ideal, Sacks forcing}

\begin{document}

\begin{abstract}
Let $\BCTWO$ assert that every well-orderable family of dense open subsets
of a perfect Polish space has dense intersection, and let $\RSWO$ assert that for every nonempty countable partial order,
there is a filter meeting every member of any given well-orderable family of dense subsets.  Over $\mathsf{ZF}$ they are shown to be equivalent. We show that in Truss's model, they characterize those $A\subseteq2^\omega$ for
which every $A$-indexed family of dense open sets has dense intersection.
Assuming choice for well-orderable families of nonempty sets, we obtain an analogous characterization for total relations $R\subseteq X\times Y$ with meager vertical sections, whenever $Y$ is a surjective image of $2^\omega$.

The Feferman-type model $\mathfrak N_{\aleph_1}$ studied by Truss \cite{Truss1974} satisfies these hypotheses and $\mathsf{DC}$.
We study the ideal of well-orderable subsets of $2^\omega$ and its relations to other ideals in this model. We show, among other things: the well-orderable subsets of $2^\omega$ are exactly the sets which are
Rothberger in every finite power, have strong measure zero, are universally
null, are Marczewski null, or contain no perfect subset. The ideal of well-orderable subsets of $2^\omega$ is closed under
well-ordered unions and is incomparable with the meager ideal.  The
equivalence with the Rothberger property does not extend to the Hurewicz property.  Further more, in this model every set is Marczewski measurable, arbitrary maps into separable metric spaces have continuous perfect restrictions, and every set of positive outer measure contains a perfect subset.
\end{abstract}

\maketitle

\section{Introduction}\label{sec:intro}

A standard source of regularity for arbitrary sets of reals is the Solovay
model.  Starting from an inaccessible cardinal, one obtains a model of
$\mathsf{ZF}+\mathsf{DC}$ in which every set of reals is Lebesgue measurable
and has the perfect set property \cite{Solovay1970}.  A weaker question asks only whether every set of positive Lebesgue outer
measure must contain a perfect subset, and whether an inaccessible cardinal
is still needed.  An equivalent question was asked, for instance, by Liang Yu on MathOverflow\footnote{Specifically, Yu asked whether the consistency of $\mathsf{ZF}+\mathsf{DC}$ plus ``every non-null set has a perfect subset'' implies the consistency of an inaccessible cardinal. The author, then being a fledgling grad student, misunderstood the meaning of \textit{non-null} and replied with a laughably not-even-wrong answer, a situation that this article seeks to rectify.}
\cite{YuMO2021}.

It turns out that an inaccessible cardinal is not needed. The relevant model is the
Feferman-type model $\mathfrak N_{\aleph_1}$ studied by Truss
\cite{Truss1974}.  Recall that this model is obtained from an $\Add(\omega,\omega_1)$-generic sequence
$\langle c_\xi:\xi<\omega_1\rangle$ of Cohen reals over $L$ by taking
the least model of $\mathsf{ZF}$ containing $L$ and every proper initial
segment $\langle c_\xi:\xi<\alpha\rangle$, $\alpha<\omega_1$.  Truss characterized the well-orderable sets of reals in that model: a well-orderable set of reals is contained
in the reals of some bounded intermediate extension, while a set which is not well-orderable contains a perfect subset.

We observe that the theory in question holds in $\mathfrak N_{\aleph_1}$. To see this, note that a fresh Cohen real makes the
reals of the preceding intermediate extension null\footnote{see e.g., \cite{Cardona2020}. Alternatively: the real line is provably the union of a meager set $A$ and a null set $B$, and this partition is absolute across forcing extensions. If $c$ is Cohen, then for every ground model real $r$, we have $r\in c-B$. Since measure is preserved by translation, $c-B$ is null, so the ground model reals are a subset of a null set.}, so every well-orderable set in $M$ is null; Truss's dichotomy then gives a perfect subset of every non-null set.  The purpose of this paper is to place that observation in a broader Baire-category setting and to derive its consequences for standard smallness and selection properties.

We use the following abbreviations.  The statement $\TP$ is Truss's
dichotomy
\[
 A\subseteq2^\omega\quad\Longrightarrow\quad
 A\text{ is well-orderable, or }A\text{ contains a nonempty perfect set}.
\]
The statement $\BCTWO$ says that every well-orderable family of dense open subsets of a perfect Polish space has dense intersection.  We write $\mathsf{AC}_{\WO}$ for choice for well-orderable families of nonempty sets. 

The first part of the paper is independent of the particular symmetric model.  Over $\mathsf{ZF}$, $\BCTWO$ is equivalent to the statement $\RSWO$ that for every nonempty countable partial order,
there is a filter meeting every member of any given well-orderable family of dense subsets.  If $\TP$ is added, then for every
$A\subseteq2^\omega$ and every perfect Polish space $X$,
\begin{equation}\label{eq:intro-baire}
 A\text{ is well-orderable}
 \quad\Longleftrightarrow\quad
 \begin{gathered}
  \text{every $A$-indexed family of dense open subsets}\\
  \text{of $X$ has dense intersection.}
 \end{gathered}
\end{equation}
The converse is witnessed by a perfect subset of $A$ indexing complements of singletons.  Under $\mathsf{AC}_{\WO}$ the same argument applies to total relations whose vertical sections are meager, provided the set indexing the vertical sections is a surjective image of $2^\omega$.

We will then show that Truss's model satisfies $\BCTWO$.  The previous results then give well-orderability characterizations involving eventual domination, infinite agreement, simultaneous splitting, and arbitrary countable covers.  

The covering consequences identify several familiar smallness properties.
For every $A\subseteq2^\omega$ in $M$,
\begin{equation}\label{eq:intro-smallness}
 \begin{gathered}
  A\text{ is well-orderable}
  \quad\Longleftrightarrow\quad A\text{ is Rothberger}
  \quad\Longleftrightarrow\quad A\in\SN\\
  \Longleftrightarrow\quad A\in\UN
  \quad\Longleftrightarrow\quad A\in\Mar
  \quad\Longleftrightarrow\quad A\text{ contains no perfect subset}.
 \end{gathered}
\end{equation}
The equivalent conditions also include that every finite power of $A$ is Rothberger.  The resulting ideal is closed under ordinal-indexed unions, but it is not the meager ideal: $2^\omega\cap L$ is a well-orderable nonmeager set without the Baire property.  Nor does the equivalence extend to the
Hurewicz property.

The same hypotheses imply that every subset of $2^\omega$ is Marczewski measurable. A countable fusion argument gives continuous restrictions of arbitrary maps into separable metric spaces; on a further perfect subset, such a restriction can be made either constant or a topological embedding. Finally, it's not hard to see that the forcing of non-well-orderable subsets of $2^\omega$, ordered by inclusion modulo the ideal of well-orderable sets, is forcing-equivalent to Sacks forcing.

The paper separates the classical construction from the consequences proved from it.  The fixed-cardinal versions of several category and diagonalization arguments are standard, and continuous restriction theorems for Marczewski-measurable functions are classical.  The contribution here is the well-orderable-indexed formulation, its verification in Truss's model, and the resulting collection of equivalences and separations.

\section{ZF-provable results}\label{sec:abstract}

We work in $\mathsf{ZF}$ unless stronger assumptions are stated.  In what follows, a set is \emph{countable} if it injects into $\omega$; for a nonempty set this is equivalent in $\mathsf{ZF}$ to admitting an enumeration by $\omega$.  By a Polish space we mean a separable completely metrizable space.  When a concrete presentation is needed, we fix a compatible complete metric and a countable dense sequence, and hence a regular countable base.  Such a presentation may be coded by a real. Perfect sets and perfect Polish spaces are assumed nonempty. 

We record two standard Polish-space facts whose usual constructions require no choice.

\begin{lemma}\label{lem:polish-coding}
Every nonempty Polish space is the range of a Borel map from $2^\omega$. And every nonempty open subset of a perfect Polish space contains a copy of $2^\omega$.
\end{lemma}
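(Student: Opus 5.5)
Both parts will be proved by explicit tree constructions driven by a fixed presentation. Fix a compatible complete metric $d$ and a countable dense sequence $\langle q_n:n<\omega\rangle$, and take as basic open sets the balls $B(q_n,2^{-k})$. Every choice below is made by picking the least index in this enumeration, so no appeal to choice is needed; the only use of completeness is the Cauchy-sequence limit, which is unique and hence requires no choice.

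\emph{First part.} I will build a continuous surjection from $\omega^\omega$ onto $X$ and then compose it with a Borel surjection from $2^\omega$ onto $\omega^\omega$. For the first map, define a scheme $s\mapsto n_s$ for $s\in\omega^{<\omega}$. Set $n_\emptyset=0$. Given $s$, enumerate in increasing index order those $m$ with $d(q_m,q_{n_s})<2^{-|s|}$; this set is nonempty because it contains $n_s$. Let $n_{s^\frown i}$ be its $i$-th element, repeating the last element if the set is finite. For $x\in\omega^\omega$, the points $q_{n_{x\restr k}}$ form a Cauchy sequence, since consecutive distances are below $2^{-k}$. Let $f(x)$ be its limit; $f$ is continuous. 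To see surjectivity, let $y\in X$ and pick inductively the least index $m$ with $d(q_m,y)<2^{-k-2}$. Consecutive chosen points are then within $2^{-k-1}$, so they lie in the relevant enumerated sets, and the resulting branch $x$ satisfies $f(x)=y$. The parameters need a little care, but this is routine. For the second map, send $z\in2^\omega$ to the sequence of gaps between successive $1$s when $z$ has infinitely many $1$s, and to a fixed point otherwise. This map is Borel and onto $\omega^\omega$.

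\emph{Second part.} Let $U\subseteq X$ be nonempty open with $X$ perfect. I will build a Cantor scheme of indices $n_s$ and radii $r_s$ for $s\in2^{<\omega}$, satisfying three conditions. The closed balls $\bar B(q_{n_s},r_s)$ lie inside $U$. The radius $r_s$ is at most $2^{-|s|}$. The balls for $s^\frown0$ and $s^\frown1$ are disjoint subsets of $B(q_{n_s},r_s)$. The splitting step uses perfectness: the open ball $B(q_{n_s},r_s)$ contains at least two points, hence two distinct dense points. Take the least pair of indices $m<m'$ with $q_m\neq q_{m'}$ both in the ball, and choose rational radii $\le 2^{-|s|-1}$ below a third of $d(q_m,q_{m'})$, small enough to fit inside the parent ball. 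The map $x\mapsto$ the intersection point of the nested balls along $x$ is then well-defined by completeness, continuous, and injective by disjointness. Since $2^\omega$ is compact, it is a homeomorphism onto its image, which is a copy of $2^\omega$ inside $U$.

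\emph{Main obstacle.} The only delicate point is ensuring that every step is definable from the fixed presentation, so that no hidden choice of points, radii, or Cauchy sequences creeps in. The least-index rule together with rational radii handles this.
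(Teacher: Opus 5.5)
Your route is the standard one the paper relies on. The paper gives no proof, only a citation to Kechris and the remark that the usual constructions are choice-free once a complete metric and a dense sequence are fixed. You use a continuous surjection from $\omega^\omega$ composed with a Borel surjection $2^\omega\to\omega^\omega$, plus a Cantor scheme of shrinking closed balls. Your second part is correct as written, and the least-index bookkeeping does make every step definable from the presentation.

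There is, however, a concrete error at the root of your first construction, and your remark that ``the parameters need a little care'' does not fix it.

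\begin{itemize}
\item With $n_\emptyset=0$, the first level lists only those $m$ with $d(q_m,q_0)<2^{0}=1$. Later steps add at most $1/2+1/4+\cdots$, so every $f(x)$ satisfies $d(f(x),q_0)\leq 2$. For instance, for $X=\R$ with the usual metric and $q_0=0$, the range of $f$ lies in $[-2,2]$, so $f$ is not onto.
\item In your surjectivity argument this appears as the unverified claim that $m_0$ (the least $m$ with $d(q_m,y)<1/4$) occurs in the root's list.
\item The repair is easy. Either let the first level enumerate all indices, $n_{\langle i\rangle}=i$, and impose the radius condition only from level $1$ on. Or replace $d$ at the outset by the compatible complete metric $\min(d,1/2)$, which makes the root's list all of $\omega$.
\end{itemize}

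With either change your argument goes through. The branch $x$ for $y$ has $n_{x\restr(k+1)}=m_k$. The estimate $d(q_{m_{k+1}},q_{m_k})<2^{-k-2}+2^{-k-3}<2^{-k-1}$ puts $m_{k+1}$ in the list at $x\restr(k+1)$. Hence $f(x)=\lim_k q_{m_k}=y$.
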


The usual proofs are choice-free once a complete metric and a countable dense set are fixed; see \cite{Kechris1995}.

\begin{definition}\label{def:principles}
The statement $\TP$ says that every subset of $2^\omega$ is either
well-orderable or contains a perfect subset.  The statement $\BCTWO$ says
that, for every perfect Polish space $X$, the intersection of every
well-orderable family of dense open subsets of $X$ is dense.  Finally,
$\RSWO$ says that every well-orderable family of dense subsets of a nonempty
countable partial order has a common filter.  Filters are upward closed toward
weaker conditions and downward directed.
\end{definition}

\begin{remark}\label{rem:choice-catalogue}
Despite resemblance to the Baire category theorem and Rasiowa-Sikorski lemma, neither $\BCTWO$ nor $\RSWO$ is a choice principle. In fact, $\BCTWO$ implies that $2^\omega$ is not well-orderable and is therefore incompatible with the axiom of choice.

Similarly indexed dense-set formulations have been studied in choiceless set theory. For example, Tachtsis considers $\mathsf{MA}(\kappa)$ in $\mathsf{ZF}$ for well-ordered cardinals $\kappa$ \cite{Tachtsis2016}.  Karagila and Schilhan define distributivity directly for families of dense open sets indexed by an arbitrary set $X$ \cite[Definition 3.1]{KaragilaSchilhan2023}.  The statements
in \Cref{def:principles} restrict the underlying spaces or partial orders and quantify over all well-orderable index sets, which is the form used below.
\end{remark}

\begin{lemma}\label{lem:continuum-not-wo}
The statement $\BCTWO$ implies that $2^\omega$ is not well-orderable.
Consequently, no nonempty perfect subset of $2^\omega$ is well-orderable.
\end{lemma}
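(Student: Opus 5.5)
The plan is to argue by contradiction, applying $\BCTWO$ to the perfect Polish space $2^\omega$ itself with the family of complements of singletons. Suppose $2^\omega$ admits a well-ordering, and for each $x\in 2^\omega$ let $U_x=2^\omega\setminus\{x\}$. Each $U_x$ is open, since singletons are closed in the Hausdorff space $2^\omega$. Each $U_x$ is also dense, since $2^\omega$ has no isolated points: every basic clopen set $[s]$ contains at least two points, hence a point other than $x$.

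Next I would check that $\{U_x : x\in 2^\omega\}$ is a well-orderable family. The map $x\mapsto U_x$ is a bijection defined without any choice, so transporting the assumed well-order of $2^\omega$ along this map well-orders the family. By $\BCTWO$, the intersection $\bigcap_x U_x$ is then dense, and in particular nonempty. But this intersection is $2^\omega\setminus 2^\omega=\emptyset$, which is a contradiction. Hence $2^\omega$ is not well-orderable.

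For the consequence, let $P\subseteq 2^\omega$ be a nonempty perfect set. The standard Cantor scheme construction produces a continuous injection $f\colon 2^\omega\to P$ without choice. One fixes the canonical countable base of $2^\omega$ and always picks the lexicographically least splitting nodes in the tree of $P$; alternatively, \Cref{lem:polish-coding} gives this directly, since $P$ is itself a perfect Polish space. If $P$ were well-orderable, the injection $f$ would pull a well-order of $P$ back to one of $2^\omega$, contradicting the first part.

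The proof has no real obstacle. The only point needing care is that the embedding of $2^\omega$ into $P$ is obtained canonically, without choice. That is handled by working with the tree $T_P$ of $P$ and choosing the least splitting nodes, or by quoting \Cref{lem:polish-coding}. One could instead run the singleton argument directly in the perfect Polish space $P$, which avoids the embedding altogether.
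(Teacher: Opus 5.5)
Your proposal is correct and follows essentially the paper's proof: the family $\{2^\omega\smallsetminus\{x\}:x\in2^\omega\}$ of complements of singletons for the first part, then transferring to perfect sets. The only difference is that you use a choice-free continuous injection $2^\omega\to P$ (or rerun the argument inside $P$), whereas the paper simply notes that every nonempty perfect subset of $2^\omega$ is homeomorphic to $2^\omega$.
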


\begin{proof}
If $2^\omega$ were well-orderable, then
\[
 \{2^\omega\smallsetminus\{x\}:x\in2^\omega\}
\]
would be a well-orderable family of dense open sets with empty intersection.
Every nonempty perfect subset of $2^\omega$ is homeomorphic to $2^\omega$.
\end{proof}

\begin{theorem}\label{thm:indexed-baire}
Assume $\TP+\BCTWO$.  Let $X$ be a perfect Polish space.  For every $A\subseteq2^\omega$, the following are equivalent:
\begin{enumerate}
\item $A$ is well-orderable;
\item for every assignment $a\mapsto D_a$ of a dense open subset of $X$ to
each $a\in A$, the intersection $\bigcap_{a\in A}D_a$ is dense in $X$;
\item every such intersection is nonempty.
\end{enumerate}
\end{theorem}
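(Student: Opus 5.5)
We will prove the cycle (1)$\Rightarrow$(2)$\Rightarrow$(3)$\Rightarrow$(1). The first two implications are essentially immediate; the content lies in (3)$\Rightarrow$(1), where we use $\TP$ to reduce to a perfect subset and exhibit a family with empty intersection.

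For (1)$\Rightarrow$(2), fix a well-ordering of $A$ and an assignment $a\mapsto D_a$. The family $\{D_a:a\in A\}$ is the image of the well-orderable set $A$ under this assignment. It is therefore itself well-orderable: order each member by the least $a$ mapped to it, which needs no choice since $A$ is already well-ordered. By $\BCTWO$ applied to the perfect Polish space $X$, the intersection $\bigcap_{a\in A}D_a$ is dense. For (2)$\Rightarrow$(3), note that $X$ is nonempty (perfect Polish spaces are assumed nonempty), so a dense subset of $X$ is nonempty.

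For (3)$\Rightarrow$(1), argue by contraposition. Suppose $A$ is not well-orderable. By $\TP$, $A$ contains a nonempty perfect set $P\subseteq 2^\omega$, which is homeomorphic to $2^\omega$ and so has size continuum. The goal is an $A$-indexed family of dense open subsets of $X$ with empty intersection. The natural candidate is complements of singletons, as in \Cref{lem:continuum-not-wo}, which requires a surjection from $A$ onto $X$, or at least onto a set whose singleton-complements cover everything.

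By \Cref{lem:polish-coding}, there is a Borel map $f\colon 2^\omega\to X$ onto $X$. Fix a homeomorphism $h\colon P\to2^\omega$; this is explicit, since a perfect subset of $2^\omega$ is the body of a perfect tree, and the canonical isomorphism with $2^{<\omega}$ needs no choice. Also fix a point $x_0\in X$. Define $g\colon A\to X$ by $g(a)=f(h(a))$ for $a\in P$ and $g(a)=x_0$ otherwise. Then $g$ is surjective, and each $D_a=X\smallsetminus\{g(a)\}$ is open because points are closed in a metric space, and dense because $X$ has no isolated points. Hence $\bigcap_{a\in A}D_a=X\smallsetminus g[A]=\emptyset$, so (3) fails.

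The main point requiring care is this last step, namely making the surjection onto $X$ and the homeomorphism $P\cong2^\omega$ explicit without choice. Both are handled by fixing a presentation of $X$ (as a complete metric with a countable dense sequence) and using the canonical tree isomorphism, as the paper's conventions allow.
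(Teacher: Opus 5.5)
Your proof is correct and follows the paper's argument. For (1)$\Rightarrow$(2) you use that the range of the assignment is well-orderable and apply $\BCTWO$. For the converse you compose a homeomorphism $P\to2^\omega$ with a surjection $2^\omega\to X$ and take complements of singletons. The only difference is cosmetic: you put $D_a=X\smallsetminus\{x_0\}$ for $a\notin P$, where the paper puts $D_a=X$, and this changes nothing.
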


\begin{proof}
If $A$ is well-orderable, then the range $\{D_a:a\in A\}$ is a
well-orderable family, so (2) follows from $\BCTWO$.  The implication
(2)$\Rightarrow$(3) is immediate.

Suppose that $A$ is not well-orderable.  By $\TP$, choose a perfect
$P\subseteq A$.  Fix a homeomorphism $h:P\to2^\omega$ and, by
\Cref{lem:polish-coding}, a surjection $\pi:2^\omega\to X$.  Define
\[
 D_a=
 \begin{cases}
 X\smallsetminus\{\pi(h(a))\},&a\in P,\\
 X,&a\notin P.
 \end{cases}
\]
Every $D_a$ is dense open because $X$ has no isolated points, while the
subfamily indexed by $P$ omits every point of $X$.  Hence the full
intersection is empty.
\end{proof}

\begin{theorem}\label{thm:bct-rs}
Over $\mathsf{ZF}$, the statements $\BCTWO$ and $\RSWO$ are equivalent.
\end{theorem}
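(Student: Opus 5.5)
We prove the two implications separately. In each direction we translate between dense open subsets of a Polish space and dense subsets of a countable partial order. We take care that every choice made is canonical, i.e.\ indexed by the fixed well-ordering of the family or by fixed enumerations, so that no choice principle is used.

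\emph{$\RSWO\Rightarrow\BCTWO$.} Let $X$ be a perfect Polish space with fixed complete metric $d$ and countable dense sequence. Let $\langle D_\alpha:\alpha<\kappa\rangle$ be a well-ordered family of dense open sets, and let $U$ be a nonempty basic open set. Let $\mathbb P$ be the countable set of basic open balls $B(q,r)$ with $q$ from the dense sequence and $r$ rational, satisfying $\overline{B(q,r)}\subseteq U$ and $r\le 1$. Order it by $B(q,r)\le B(q',r')$ iff $d(q,q')+r\le r'$. This formal containment guarantees $\overline{B(q,r)}\subseteq\overline{B(q',r')}$.

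For each $\alpha$ let $E_\alpha=\{B(q,r)\in\mathbb P:\overline{B(q,r)}\subseteq D_\alpha\}$, and for each $n$ let $F_n=\{B(q,r)\in\mathbb P: r<2^{-n}\}$. Each of these sets is dense, using density and openness of $D_\alpha$; no choice is needed since $\mathbb P$ is countable and fixed. The family $\{E_\alpha\}\cup\{F_n\}$ is well-orderable, indexed by $\kappa+\omega$.

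By $\RSWO$ we get a filter $G$ meeting all of these sets. Directedness gives, for any two members of $G$, a common extension, so the closures of members of $G$ have the finite intersection property. Because $G$ meets every $F_n$, these closed sets have radii going to $0$. Completeness then yields a unique point $x\in\bigcap_{p\in G}\overline{p}$. We have $x\in U$, and $x\in D_\alpha$ because $G\cap E_\alpha\neq\emptyset$. Note that no choice is needed to pick the point, since it is unique.

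\emph{$\BCTWO\Rightarrow\RSWO$.} Let $\mathbb P$ be countable and nonempty, enumerated as $\langle p_k\rangle$, and let $\langle D_\alpha:\alpha<\kappa\rangle$ be dense subsets. Consider the space $X$ of sequences $x\in\mathbb P^\omega$ with $x(n+1)\le x(n)$. This is a closed subset of the Polish space $\mathbb P^\omega$ (with $\mathbb P$ discrete), hence Polish, but it may fail to be perfect, for instance when $\mathbb P$ has minimal elements.

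To fix this, work in $Y=X\times2^\omega$, which is perfect Polish and nonempty. Take the sets
\[
 O_\alpha=\{(x,y): \exists n\ x(n)\in D_\alpha\}
\]
together with, for each $k$, the set $O'_k$ of pairs $(x,y)$ such that either $x(n)\le p_k$ for some $n$, or $x(n)\perp p_k$ for some $n$, or $x(n)$ is compatible with $p_k$ but ... . Actually $O'_k$ is not needed, since the filter generated by a descending chain is automatically a filter. Each $O_\alpha$ is open, and it is dense: given a finite initial segment ending at $p$, extend it by some $q\le p$ with $q\in D_\alpha$, taking the least such $q$ in the enumeration, and continue constantly.

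$\BCTWO$ gives a point $(x,y)$ lying in every $O_\alpha$. Then $G=\{q:\exists n\ x(n)\le q\}$ is an upward closed, downward directed filter meeting every $D_\alpha$.

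The main obstacle is the perfectness requirement in $\BCTWO$, which we handle with the product with $2^\omega$ as above. The other point needing care is checking openness and density without any choice, which is routine given the fixed enumerations.
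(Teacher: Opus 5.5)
Your proof is correct. It differs from the paper's mainly in the direction $\BCTWO\Rightarrow\RSWO$.

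For $\RSWO\Rightarrow\BCTWO$ you follow essentially the paper's idea. The paper orders finite nested sequences of basic open sets by end extension, so that a filter is literally a branch. Your rational balls with formal containment instead need a Cantor-intersection step. That step is choice-free, but not because $x$ is unique. The reason is that the centers of the least members of $G\cap F_n$ (in a fixed enumeration of $\mathbb P$) form a Cauchy sequence: compatible balls of radii below $2^{-n}$ and $2^{-m}$ have centers within $2^{-n}+2^{-m}$. You should also take conditions to be pairs $(q,r)$ with $r>0$ rather than balls as sets, so that the order is well defined and antisymmetric.

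For $\BCTWO\Rightarrow\RSWO$ the paper passes to the separative quotient, the regular open algebra, a countable subalgebra $B$, and its Stone space $\Ult(B)$. An ultrafilter on $B$ need not pull back to a directed subset of $Q$, so the paper must add the dense open sets $H_{p,q}$. It also splits into cases according to whether $B$ has an atom, in order to have a perfect space. Your space of descending $\omega$-chains times $2^\omega$ bypasses all of this. A descending chain generates a filter outright, so the abandoned $O'_k$ sentence should simply be deleted. The factor $2^\omega$ gives perfectness uniformly, with no case split.

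What the paper's route buys in return is that in the atomless case $\Ult(B)$ is homeomorphic to $2^\omega$. Its argument therefore uses $\BCTWO$ only for the Cantor space, and together with the converse it shows that this single instance already yields the full principle. Your argument applies $\BCTWO$ to a typically non-compact zero-dimensional space, which the statement permits.
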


\begin{proof}
Assume first $\BCTWO$, and let $Q$ be a nonempty countable partial order.
Adjoin a largest condition if necessary.  Let $\bar Q$ be the separative
quotient of $Q$, and let
\[
 e:Q\longrightarrow\RO(\bar Q)^+
\]
be the canonical dense map.  Let $B$ be the countable Boolean subalgebra of
$\RO(\bar Q)$ generated by $e[Q]$.  These constructions can be made
recursively from an enumeration of $Q$; see \cite[Chapter 14]{Jech2003}.

Every countable Boolean algebra has an ultrafilter in $\mathsf{ZF}$.  Indeed,
enumerate the algebra and decide successively whether to include each element
or its complement while keeping the finite meet nonzero.  Thus the Stone
space $X=\Ult(B)$ is nonempty.  For $b\in B$, write
$[b]=\{u\in X:b\in u\}$.

Let $D\subseteq Q$ be dense, and let $p,q\in Q$.  Put
\[
 O_D=\bigcup_{d\in D}[e(d)]
\]
and
\[
 H_{p,q}=X\smallsetminus[e(p)\wedge e(q)]
 \ \cup\!\bigcup_{r\leq p,q}[e(r)].
\]
Both sets are dense open.  For $O_D$, given nonzero $b\in B$, choose
$p_0\in Q$ with $0<e(p_0)\leq b$ and then $d\in D$ with $d\leq p_0$.
For $H_{p,q}$, if
$b\wedge\neg(e(p)\wedge e(q))$ is nonzero, the corresponding clopen set
works.  Otherwise choose $s\in Q$ with $0<e(s)\leq b$.  The inequalities
$e(s)\leq e(p),e(q)$ say that every extension of $s$ is compatible with
$p$ and with $q$.  Choose $t\leq s,p$ and then $r\leq t,q$.  The nonempty
clopen set $[e(r)]$ is contained in $[b]\cap H_{p,q}$.

Let $\mathcal D$ be a well-orderable family of dense subsets of $Q$.  If $B$
has an atom, its principal ultrafilter belongs to every dense open subset of
$X$.  If $B$ is atomless, then $X$ is a perfect Polish space,
and $\BCTWO$ gives an ultrafilter belonging to every member of
\[
 \{O_D:D\in\mathcal D\}\cup\{H_{p,q}:p,q\in Q\}.
\]
In either case, fix such an ultrafilter $u$ and let
\[
 G=\{p\in Q:e(p)\in u\}.
\]
The sets $O_D$ ensure that $G$ meets every $D\in\mathcal D$.  If
$p,q\in G$, then $u\in[e(p)\wedge e(q)]\cap H_{p,q}$, so some
$r\leq p,q$ belongs to $G$.  Upward closure toward weaker conditions is
immediate.  Thus $G$ is the required filter.

Conversely, assume $\RSWO$.  Let $X$ be a perfect Polish
space, let $\mathcal D$ be a well-orderable family of dense open subsets of
$X$, and fix a nonempty open set $U\subseteq X$.  Consider the countable
partial order of finite sequences
\[
 \langle B_0,\dots,B_n\rangle
\]
of nonempty basic open sets satisfying
\[
 \overline{B_0}\subseteq U,\qquad
 \overline{B_{i+1}}\subseteq B_i,\qquad
 \diam(B_i)<2^{-i}
\]
for all applicable $i$, ordered by end extension.  For each
$D\in\mathcal D$, the set of conditions whose last basic open set has closure
contained in $D$ is dense.  The sets requiring length at least $n$, for
$n<\omega$, are dense as well.  These requirements form a well-orderable
family, so $\RSWO$ supplies a common filter.

Since compatible conditions in this tree are comparable, the filter determines
a branch $\langle B_n:n<\omega\rangle$.  The distinguished centers of the
$B_n$ form a Cauchy sequence.  Completeness gives a limit $x$, and the closure
nesting gives $x\in U\cap B_n$ for every $n$.  Whenever the requirement for
$D$ is met, some $\overline{B_n}$ is contained in $D$, and therefore
$x\in D$.  Thus $U\cap\bigcap\mathcal D$ is nonempty.  Since $U$ was
arbitrary, the intersection is dense.
\end{proof}

\begin{corollary}\label{cor:indexed-rs}
Assume $\TP+\BCTWO$.  For every $A\subseteq2^\omega$, the following are equivalent:
\begin{enumerate}
\item $A$ is well-orderable;
\item whenever $Q$ is a nonempty countable partial order and
$\langle D_a:a\in A\rangle$ is an $A$-indexed family of dense subsets of
$Q$, there is a filter meeting every $D_a$.
\end{enumerate}
\end{corollary}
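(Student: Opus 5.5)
The plan is to deduce the corollary from \Cref{thm:bct-rs} together with \Cref{thm:indexed-baire}. The direction (1)$\Rightarrow$(2) is immediate. If $A$ is well-orderable, then the range $\{D_a:a\in A\}$ is the image of a well-orderable set, so it is a well-orderable family of dense subsets of $Q$. Here a well-order of $A$ induces one on the range by sending each set to the least index producing it, which uses no choice. \Cref{thm:bct-rs} converts $\BCTWO$ into $\RSWO$, and $\RSWO$ then supplies a filter meeting every member of this family, hence every $D_a$.

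For (2)$\Rightarrow$(1) I argue by contraposition, imitating the proof of \Cref{thm:indexed-baire} in the combinatorial setting. Suppose $A$ is not well-orderable. By $\TP$, fix a perfect $P\subseteq A$ and a homeomorphism $h:P\to2^\omega$. Take $Q=2^{<\omega}$ ordered by end extension, with longer sequences stronger. For $a\in P$, let $D_a$ be the set of $s\in2^{<\omega}$ such that $s\not\subseteq h(a)$. For $a\in A\smallsetminus P$, let $D_a=Q$, or alternatively the set of sequences of length at least $|a\restr 0|$, though $Q$ itself suffices. Each $D_a$ with $a\in P$ is dense: given $s$, either $s$ already lies off $h(a)$, or the extension $s^\frown\langle 1-h(a)(|s|)\rangle$ does.

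Now suppose $G$ is a filter meeting every $D_a$. Any two elements of $G$ are compatible, so they are comparable, and $G$ lies along a single branch. There are two cases.

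\begin{itemize}
\item If $G$ is infinite, it determines a real $x=\bigcup G\in2^\omega$. Put $a=h^{-1}(x)\in P$. Every $s\in G$ is an initial segment of $x=h(a)$, so $G\cap D_a=\emptyset$, a contradiction.
\item If $G$ is finite, it has a longest element $t$, and $G$ consists of $t$ and its predecessors. Choose any $y\in2^\omega$ extending $t$ and put $a=h^{-1}(y)$. Again $G\cap D_a=\emptyset$.
\end{itemize}

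Both choices of $a$ are explicit, so no choice is needed. Hence no such filter exists, and (2) fails.

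The only potential obstacle is the finite-filter case, since a filter need not be generic for the length-$n$ sets. I handle it directly as above, rather than by adding density requirements to the $A$-indexed family. I could also fold the sets $\{s:|s|\ge n\}$ into the $D_a$ for $a\notin P$, but that would require a map from $A\smallsetminus P$ onto $\omega$. The direct case analysis avoids this, so the argument goes through cleanly.
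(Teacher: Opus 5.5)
Your proposal is correct and follows the paper's proof essentially verbatim. The forward direction applies \Cref{thm:bct-rs} to the well-orderable range $\{D_a:a\in A\}$, and the converse uses $D_a=\{s:s\not\subseteq h(a)\}$ in $2^{<\omega}$ for $a\in P$; your finite/infinite case split is just the paper's ``extend it, if necessary, to $y$'' step (the aside about $|a\restr 0|$ is vacuous and can be dropped).
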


\begin{proof}
The forward implication follows from \Cref{thm:bct-rs}.  Conversely, suppose
that $A$ is not well-orderable.  Choose a perfect $P\subseteq A$ and a
homeomorphism $h:P\to2^\omega$.  In $Q=2^{<\omega}$, ordered by extension,
put
\[
 D_a=\{s\in2^{<\omega}:s\not\subseteq h(a)\}\quad(a\in P),
 \qquad D_a=Q\quad(a\notin P).
\]
Every $D_a$ is dense.  The conditions in a filter on $2^{<\omega}$ are
pairwise compatible, so their union is a finite string or a real.  Extend it,
if necessary, to $y\in2^\omega$ and let $a=h^{-1}(y)$.  Every condition in
the filter is an initial segment of $y$, so the filter misses $D_a$.
\end{proof}

The passage from dense open sets to arbitrary meager sets uses choice only to
select decompositions for a well-orderable family.

\begin{corollary}\label{cor:indexed-meager}
Assume $\mathsf{AC}_{\WO}+\TP+\BCTWO$.  Let $A\subseteq2^\omega$ and let
$X$ be a perfect Polish space.  The following are equivalent:
\begin{enumerate}
\item $A$ is well-orderable;
\item for every $A$-indexed family $\langle M_a:a\in A\rangle$ of meager
subsets of $X$, the complement of $\bigcup_{a\in A}M_a$ is dense;
\item no $A$-indexed family of meager subsets of $X$ covers $X$.
\end{enumerate}
\end{corollary}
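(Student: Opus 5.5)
The plan is to follow the pattern of \Cref{thm:indexed-baire}, inserting one application of $\mathsf{AC}_{\WO}$ to turn meager sets into dense open sets.

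For (1)$\Rightarrow$(2), suppose $A$ is well-orderable and let $\langle M_a:a\in A\rangle$ be meager subsets of $X$. For each $a\in A$ consider the set $S_a$ of all sequences $\langle U_n:n<\omega\rangle$ of dense open subsets of $X$ with $M_a\cap\bigcap_n U_n=\emptyset$. Each $S_a$ is nonempty because $M_a$ is meager. The family $\{S_a:a\in A\}$ is indexed by a well-orderable set, so it is itself well-orderable, and $\mathsf{AC}_{\WO}$ gives a choice function $a\mapsto\langle U^a_n:n<\omega\rangle$. The family $\{U^a_n:a\in A,\ n<\omega\}$ is the image of the well-orderable set $A\times\omega$, hence well-orderable. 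By $\BCTWO$ its intersection is dense, and this intersection is disjoint from every $M_a$, so it lies in the complement of $\bigcup_a M_a$. The one point to check is that the image of a well-orderable set is well-orderable in $\mathsf{ZF}$; this holds by taking least preimages.

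The implication (2)$\Rightarrow$(3) is immediate, since $X$ is nonempty and a dense complement is in particular nonempty.

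For (3)$\Rightarrow$(1), argue by contraposition as in \Cref{thm:indexed-baire}. If $A$ is not well-orderable, use $\TP$ to get a perfect $P\subseteq A$. Fix a homeomorphism $h:P\to2^\omega$ and the surjection $\pi:2^\omega\to X$ from \Cref{lem:polish-coding}. Put $M_a=\{\pi(h(a))\}$ for $a\in P$ and $M_a=\emptyset$ otherwise. Singletons are nowhere dense, hence meager, because $X$ is perfect, and the family covers $X$. No choice is needed in this direction.

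The only delicate step is the choice of Borel/$G_\delta$ witnesses in (1)$\Rightarrow$(2), and $\mathsf{AC}_{\WO}$ is exactly what handles it.
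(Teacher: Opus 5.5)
Your proof is correct and is essentially the paper's argument. The paper uses $\mathsf{AC}_{\WO}$ to choose closed nowhere dense sets $F_{a,n}$ covering each $M_a$ and applies $\BCTWO$ to their complements indexed by $A\times\omega$, which amounts to your choice of dense open sequences; your converse is the same perfect-subset singleton cover, and your explicit sets $S_a$ and the note that images of well-orderable sets are well-orderable spell out the paper's brief choice step.
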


\begin{proof}
Suppose that $A$ is well-orderable.  By $\mathsf{AC}_{\WO}$, choose for each
$a\in A$ a sequence $\langle F_{a,n}:n<\omega\rangle$ of closed nowhere
dense sets covering $M_a$.  The set $A\times\omega$ is well-orderable, so
$\BCTWO$ applied to the dense open sets $X\smallsetminus F_{a,n}$ gives (2).
The implication (2)$\Rightarrow$(3) is immediate.

If $A$ is not well-orderable, use a perfect $P\subseteq A$ and a surjection
$P\to X$ as in \Cref{thm:indexed-baire}.  The corresponding singletons,
together with empty sets off $P$, form an $A$-indexed meager cover of $X$.
\end{proof}

The conclusion in (2) does not assert that the union is meager.  This
distinction will be witnessed in Truss's model by the constructible reals.

\begin{theorem}[Meager-section theorem]\label{thm:meager-section}
Assume $\mathsf{AC}_{\WO}+\TP+\BCTWO$.  Let $X$ be a nonempty perfect
Polish space, let $Y$ be a nonempty set equipped with a surjection
$\pi:2^\omega\to Y$, and let $R\subseteq X\times Y$.  Suppose that every
vertical section
\[
 R^y=\{x\in X:xRy\}
\]
is meager and that $R$ is total:
\[
 \forall x\in X\;\exists y\in Y\quad xRy.
\]
Then for every $A\subseteq2^\omega$, the following are equivalent:
\begin{enumerate}
\item $A$ is well-orderable;
\item for every $f:A\to Y$ there is $x\in X$ such that
$\neg(xRf(a))$ for every $a\in A$.
\end{enumerate}
Moreover, if $A$ is well-orderable, then for every $f:A\to Y$ such points are dense in $X$.
\end{theorem}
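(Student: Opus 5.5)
The forward direction, including the ``moreover'' clause, reduces to \Cref{cor:indexed-meager}. Suppose $A$ is well-orderable and $f:A\to Y$ is given. Consider the $A$-indexed family $\langle M_a:a\in A\rangle$ with $M_a=R^{f(a)}$. Each $M_a$ is meager by hypothesis. Since $A$ is well-orderable, \Cref{cor:indexed-meager} (which uses $\mathsf{AC}_{\WO}$ to choose nowhere dense decompositions) shows that $X\smallsetminus\bigcup_{a\in A}M_a$ is dense in $X$. In particular it is nonempty because $X$ is nonempty. A point $x$ in this complement satisfies $\neg(xRf(a))$ for every $a\in A$, so we obtain (2) together with the density claim.

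The converse is the substantive direction, and I would argue it contrapositively using $\TP$, in the same style as the proofs of \Cref{thm:indexed-baire} and \Cref{cor:indexed-meager}. Suppose $A$ is not well-orderable. By $\TP$, choose a nonempty perfect $P\subseteq A$, and fix a homeomorphism $h:P\to2^\omega$. Define $f:A\to Y$ by $f(a)=\pi(h(a))$ for $a\in P$, and on $A\smallsetminus P$ let $f$ take the constant value $\pi(0^\omega)$; $Y$ is nonempty, and no choice is needed since the value is fixed. Then $f\restr P$ maps onto $Y$ because $h$ is onto $2^\omega$ and $\pi$ is onto $Y$. Now let $x\in X$ be arbitrary. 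Totality of $R$ gives some $y\in Y$ with $xRy$, and surjectivity of $f\restr P$ gives $a\in P\subseteq A$ with $f(a)=y$. So every $x$ satisfies $xRf(a)$ for some $a\in A$, and (2) fails.

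The main point to check is that none of this uses choice beyond what is available. The converse defines $f$ explicitly from $h$ and $\pi$, and it never has to pick a witness $y$ as a function of $x$: it only needs existence for each fixed $x$, so no uniformization of $R$ is required. The forward direction uses choice only inside \Cref{cor:indexed-meager}, where it is already covered by $\mathsf{AC}_{\WO}$. I expect no genuine obstacle. The only subtlety is making sure that the perfect set supplied by $\TP$ actually yields a surjection of $A$ onto $Y$, and the composition $\pi\circ h$ on $P$ does exactly that.
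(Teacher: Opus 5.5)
Your proposal is correct and follows the paper's proof: the forward direction, including density, comes from applying \Cref{cor:indexed-meager} to $\langle R^{f(a)}:a\in A\rangle$. The converse uses a perfect $P\subseteq A$ from $\TP$, sets $f=\pi\circ h$ on $P$ with a fixed value off $P$ (your canonical $\pi(0^\omega)$ plays the role of the paper's $y_0$), and then $f[A]=Y$ together with totality rules out a common avoider.
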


\begin{proof}
If $A$ is well-orderable and $f:A\to Y$, apply
\Cref{cor:indexed-meager} to the family $\langle R^{f(a)}:a\in A\rangle$. Its union has dense complement, and every point in that complement avoids the entire family.

Conversely, suppose that $A$ is not well-orderable.  Choose a perfect $P\subseteq A$ and a homeomorphism $h:P\to2^\omega$.  Fix $y_0\in Y$ and put
\[
 f(a)=
 \begin{cases}
 \pi(h(a)),&a\in P,\\
 y_0,&a\notin P.
 \end{cases}
\]
Then $f[A]=Y$.  Totality says that every $x\in X$ is $R$-related to some member of $f[A]$, so no common avoider exists.
\end{proof}

\begin{proposition}\label{prop:uniform-sections}
Assume $\TP+\BCTWO$.  In \Cref{thm:meager-section}, replace the hypothesis
that every $R^y$ is meager by a fixed representation
\[
 R=\bigcup_{n<\omega}R_n
\]
such that every $(R_n)^y$ is closed nowhere dense.  Then the same equivalence
holds without $\mathsf{AC}_{\WO}$.
\end{proposition}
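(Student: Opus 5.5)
The argument follows the proof of \Cref{thm:meager-section}. The only place that used $\mathsf{AC}_{\WO}$ was the forward direction, via \Cref{cor:indexed-meager}, and there choice served only to pick, for each $a\in A$, a countable cover of the meager section $R^{f(a)}$ by closed nowhere dense sets. With the fixed representation $R=\bigcup_n R_n$, that selection is already uniform. For each $y\in Y$ we have $R^y=\bigcup_{n<\omega}(R_n)^y$, and each $(R_n)^y$ is closed nowhere dense by hypothesis. So the cover is canonically determined by $y$, and no choices are needed.

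For the forward direction, let $A$ be well-orderable and $f:A\to Y$. Consider the family
\[
 \mathcal D=\{X\smallsetminus (R_n)^{f(a)} : a\in A,\ n<\omega\}.
\]
Each member is dense open. The map $(a,n)\mapsto X\smallsetminus(R_n)^{f(a)}$ is definable from $f$ and the fixed sequence $\langle R_n:n<\omega\rangle$, with no choices. Its domain $A\times\omega$ is well-orderable, since a well-order of $A$ together with the lexicographic order gives one. Hence $\mathcal D$, the image of a well-orderable set under a definable map, is a well-orderable family; we can well-order it by least preimage. By $\BCTWO$, $\bigcap\mathcal D$ is dense in $X$. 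In particular it is nonempty, which gives the ``moreover'' clause as well. Any $x\in\bigcap\mathcal D$ satisfies $x\notin(R_n)^{f(a)}$ for all $n$ and $a$, hence $x\notin R^{f(a)}$, that is, $\neg(xRf(a))$ for every $a\in A$.

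The converse direction is copied verbatim from \Cref{thm:meager-section}, and it never used $\mathsf{AC}_{\WO}$. Suppose $A$ is not well-orderable. Then $\TP$ gives a perfect $P\subseteq A$, and we fix a homeomorphism $h:P\to2^\omega$. Note that fixing one homeomorphism and one $y_0$ are single choices, allowed in $\mathsf{ZF}$. Define $f$ through $\pi\circ h$ on $P$ and as $y_0$ off $P$. Then $f[A]=Y$, and totality of $R$ rules out a common avoider. Totality concerns $R$ itself, so it is unaffected by the change of hypothesis.

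The only point needing care, and it is minor, is checking that the family $\mathcal D$ is genuinely well-orderable without choice. The sets $(R_n)^{f(a)}$ are computed outright from $(a,n)$, so the indexing surjection from $A\times\omega$ suffices. It is also worth recording that the meagerness of $R^y$ assumed in \Cref{thm:meager-section} is now automatic. Everything else is inherited unchanged from the earlier proof.
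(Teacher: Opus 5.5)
Your proof is correct and is essentially the paper's argument: apply $\BCTWO$ to the choice-free family of dense open sets $X\smallsetminus(R_n)^{f(a)}$ indexed by $A\times\omega$, and reuse the converse from \Cref{thm:meager-section} unchanged. One small wording slip: the ``moreover'' clause comes from the density of $\bigcap\mathcal D$, while its nonemptiness only gives (2).
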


\begin{proof}
For well-orderable $A$ and $f:A\to Y$, apply $\BCTWO$ to the dense open
sets $X\smallsetminus(R_n)^{f(a)}$, indexed by $A\times\omega$.  The converse is
unchanged.
\end{proof}

\begin{remark}\label{rem:relations-literature}
Under the axiom of choice, relations of this kind give familiar cardinal characteristics by considering the least size of a set $B\subseteq Y$ such that every $x\in X$ is $R$-related to some member of $B$; see Blass \cite{Blass1996}.
\end{remark}

\section{Truss's model}\label{sec:model}

 Throughout this section, $\omega_1=\omega_1^L$.  Over $L$, let
\[
 \mathbb P=\Fn(\omega_1\times\omega,\omega,{<}\omega),
\]
let $G\subseteq\mathbb P$ be generic, and let $c_\xi\in\omega^\omega$ be the
$\xi$th Cohen real.  For $\alpha<\omega_1$, put
\[
 G_{<\alpha}=G\restr(\alpha\times\omega).
\]
We write the corresponding intermediate extensions as $L[G_{<\alpha}]$.

For $\alpha<\omega_1$, let $c_{<\alpha}=\langle c_\xi:\xi<\alpha\rangle$, so that $L[G_{<\alpha}]=L[c_{<\alpha}]$.
Following Truss \cite{Truss1974}, let $M=\mathfrak N_{\aleph_1}$ be the Feferman-type inner model of $L[G]$ obtained by adjoining every proper initial segment $c_{<\alpha}$, $\alpha<\omega_1$ but not the full sequence.

\begin{theorem}\label{thm:truss}
The model $M$ has the following properties.
\begin{enumerate}
\item $M\models\mathsf{ZF}+\mathsf{AC}_{\WO}$.
\item If $A\subseteq\R$ belongs to $M$, then
\[
A\text{ is well-orderable in }M
\quad\Longleftrightarrow\quad
\exists\alpha<\omega_1\quad A\subseteq\R^{L[G_{<\alpha}]}.
\]
\item Every non-well-orderable set of reals contains a perfect subset.
\item $M\models V=L(\R)$, and hence $M\models\mathsf{SVC}(\R)$.
\end{enumerate}
\end{theorem}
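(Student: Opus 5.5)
The plan is to work entirely in $L[G]$ and use the standard symmetric-model description of $M$. The key tool is a support lemma. Every element $x\in M$ is definable in $M$ from an ordinal, a finite set of parameters from $L$, and some initial segment $c_{<\alpha}$ with $\alpha<\omega_1$. Moreover, the truth of statements about such $x$ depends only on $G_{<\alpha}$ together with permutations of the coordinates $[\alpha,\omega_1)$. This is Truss's setup. I would prove it by the usual argument: represent $M$ as $\bigcup_{\alpha<\omega_1}\mathrm{HOD}^{L[G]}(L\cup\{c_{<\beta}:\beta<\omega_1\})$ restricted appropriately. Equivalently, $M$ is the symmetric model for the group of permutations of $\omega_1$, with the filter of subgroups fixing countable initial segments pointwise. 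Homogeneity of $\Add(\omega,\omega_1\smallsetminus\alpha)$ then yields the following: if $x\in M$ has support $\alpha$ and is a set of ordinals, then $x\in L[G_{<\alpha}]$.

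For (1), I would first use the countable closure of supports. Since $\omega_1$ is regular in $L[G]$ (ccc), any countably many supports are bounded. Together with the usual symmetric-model verification, this gives $\mathsf{ZF}$. For $\mathsf{AC}_{\WO}$, take a family $\langle X_\gamma:\gamma<\lambda\rangle$ in $M$ with support $\alpha$. Each nonempty $X_\gamma$ contains an element definable from some $c_{<\beta}$ and an ordinal. For each $\gamma$, I would choose the least pair $(\beta,\text{ordinal})$ giving an element. The worry is that the $\beta$'s could be unbounded in $\omega_1$, in which case the choice function would not lie in $M$. This is the step I expect to be the main obstacle. My first attempt is the following. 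By homogeneity, whether $X_\gamma$ has an element with support $\beta>\alpha$ depends only on $L[G_{<\alpha}]$ and on the order type of $\beta\smallsetminus\alpha$. So $\beta_\gamma$ can be taken $\le\alpha+\omega$ after a permutation moving the relevant coordinates into a fixed countable block. Hence a single bound $\alpha+\omega$ works, and the choice function is definable from $c_{<\alpha+\omega}$.

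For (2), there are two directions. If $A\subseteq\R^{L[G_{<\alpha}]}$, then $A$ is well-orderable, because $L[G_{<\alpha}]$ is a model of $\mathsf{ZFC}$ lying in $M$. Conversely, suppose $A$ carries a well-ordering in $M$ with support $\alpha$. Then each element of $A$ is the $\xi$th element for some ordinal $\xi$, so it is definable from $c_{<\alpha}$ and an ordinal. Such a real is fixed by every automorphism fixing $\alpha$ pointwise, hence lies in $L[G_{<\alpha}]$ by the homogeneity lemma.

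For (3), let $A$ have support $\alpha$ and be non-well-orderable. By (2), $A$ contains a real $r\notin L[G_{<\beta}]$ for every fixed $\beta$. Take $r\in A$ with $r\in L[G_{<\delta}]$ for some $\delta>\alpha$, and write $r=\tau(c_{<\alpha},c_{[\alpha,\delta)})$, where the tail part is Cohen over $L[G_{<\alpha}]$. The tail is a single Cohen real $c$, and some condition $p$ forces $\tau(c)\in\dot A$ and $\tau(c)\notin L[G_{<\alpha}]$. The set of reals Cohen-generic over the countable model $L[G_{<\alpha}]$ (countable in $M$ since $\omega_1^L$ is collapsed? Actually $\omega_1^L$ remains $\omega_1$; instead use that $L[G_{<\alpha}]$ has only countably many dense sets in $L[G_{<\alpha+1}]$) contains a perfect set of mutually generic reals through $p$. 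Each such real yields a symmetric image of $r$, which still lies in $A$ by the support condition. Finally, $c\mapsto\tau(c)$ is continuous and injective on a further perfect subset, which gives a perfect subset of $A$.

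For (4), every $x\in M$ is definable from ordinals, $L$-parameters, and some $c_{<\alpha}$, which is coded by a real. Hence $M=L(\R)$. Then $\mathsf{SVC}(\R)$ holds: every set is a surjective image of $\mathrm{Ord}\times\R^{<\omega}$ via the definability map, and this map is definable in $L(\R)$.
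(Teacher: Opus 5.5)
\textbf{There is a genuine gap, at the step that carries (3).} The paper does not reprove this theorem; it cites Truss for (1)--(3) and Ryan-Smith for (4). Your sketch has to stand on its own there, and it does not.

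\emph{$M$ is not the model you named.} $M$ is not the symmetric model $N$ for permutations of the coordinates with the filter generated by the groups $\mathrm{fix}(\alpha)$. In $N$ the name $\{\dot c_\xi:\xi<\omega_1\}$ is fixed by every permutation, so $\{c_\xi:\xi<\omega_1\}\in N$. This set is not well-orderable in $N$: by your own homogeneity argument for (2), a well-ordering with support $\alpha$ would put every $c_\xi$ into $L[G_{<\alpha}]$. Yet it contains no perfect set. A perfect set coded in $L[G_{<\beta}]$ has uncountably many members in $L[G_{<\beta}]$, while only the countably many $c_\xi$ with $\xi<\beta$ lie there. So (3) is false in $N$, and your argument, run in $N$ with $r=c_\alpha$, must fail somewhere.

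\emph{Where it fails.} It fails at ``each such real yields a symmetric image of $r$, which still lies in $A$''. Invariance of $\dot A$ under the group only gives $(\pi\tau)^G\in A$ for automorphisms $\pi\in L$. It says nothing about $\tau(c')$ for an arbitrary $c'\in L[G]$ that is Cohen over $L[G_{<\alpha}]$, and the branches of your perfect set are exactly such reals.

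\emph{The missing ingredient} is a re-factoring lemma that uses the initial-segment nature of $M$. Suppose $c'\in L[G_{<\delta}]$ is Cohen over $L[G_{<\alpha}]$ and meets $p$. Then $L[G_{<\delta}]$ extends $L[G_{<\alpha}][c']$ by a countable quotient, which is trivial or Cohen. Hence there is a $\mathbb P$-generic $G'$ over $L$ such that:
\begin{itemize}
\item $G'_{<\alpha}=G_{<\alpha}$;
\item its $\alpha$th coordinate is $c'$;
\item its initial-segment models are cofinally interleaved with the $L[G_{<\gamma}]$.
\end{itemize}
Then $G'$ generates the same $M$, and $A$ (defined in $M$ from $c_{<\alpha}$ and ordinals) is unchanged. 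Weak homogeneity of the tail then turns $p\Vdash\tau(\dot c)\in\dot A$ into $\tau(c')\in A$. In $N$ this re-factoring changes the model, which is why (3) fails there. The same lemma is also what your support lemma and (4) actually rest on.

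\emph{Two other steps are justified incorrectly.}

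First, the perfect set of generics cannot be obtained by counting dense sets. $L[G_{<\alpha}]\models\mathsf{CH}$ and $\omega_1^L$ is not collapsed, so $L[G_{<\alpha}]$ has $\aleph_1$ dense subsets of Cohen forcing in every later model. Use instead the countable forcing of finite approximations to a finitely branching perfect tree with stem $p$. It is equivalent to Cohen forcing, and over any ground model it adds a perfect tree whose distinct branches are mutually generic. So such a tree exists in $L[G_{<\alpha+1}]$. Injectivity of $\tau$ on its branches then follows: if $\tau(x)=\tau(y)$ for distinct branches, the value lies in $L[G_{<\alpha}][x]\cap L[G_{<\alpha}][y]=L[G_{<\alpha}]$, contradicting $p\Vdash\tau(\dot c)\notin L[G_{<\alpha}]$.

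Second, in (4), knowing that every $x\in M$ is ordinal definable in $M$ from a real gives $\mathsf{SVC}(\R)$, but it does not give $x\in L(\R)$. You must show that membership in $x$ is computed inside $L(\R)$. One way is as a statement forced by $\mathbb P$ over $L[c_{<\alpha}][s]$ for reals $s$, which is legitimate only after re-factoring $G$ over that model.

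By contrast, your $\mathsf{AC}_{\WO}$ step is sound given the support lemma. Permutations of coordinates in $L$ fixing $\alpha$ pointwise map bounded sets to bounded sets, so they preserve $M$ and each $X_\gamma$.
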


The facts collected in \Cref{thm:truss} are due to Truss and, for (4), Ryan-Smith.
The bounding used in (2) is the L\'evy-style argument of Truss \cite[Lemma 2.4, p.~202]{Truss1974}; the converse direction follows from Truss's Theorem 3.1 and its use in the proof of Theorem 3.3, where Truss notes that a set contained in the relevant $\mathsf{ZFC}$ submodel can be well-ordered, while otherwise Theorem 3.1 applies \cite[Theorem 3.1 and the proof of Theorem 3.3, especially p.~208]{Truss1974}.
Statement (3) is Truss's Theorem 3.2 \cite[Theorem 3.2]{Truss1974}, and Truss also proves (1) for this model.
Ryan-Smith \cite[Proposition 3.19]{RyanSmith2025} shows that $M\models V=L(\R)$ and consequently $M\models\mathsf{SVC}(\R)$.

Here $\mathsf{SVC}(\R)$ means that every nonempty set is a surjective image
of $\R\times\eta$ for some ordinal $\eta$.

\begin{lemma}\label{lem:same-reals}
The symmetric model $M$ and the full forcing extension $L[G]$ have the same
reals.
\end{lemma}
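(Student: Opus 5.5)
Since $M\subseteq L[G]$, we have $\R^M\subseteq\R^{L[G]}$, so the work is the reverse inclusion: every real $x\in L[G]$ lies in $M$. The plan is a countable-support argument: a real is coded by countably many conditions, and by ccc only countably many coordinates of $\omega_1$ are involved.

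Fix $x\in\R^{L[G]}$, say $x\in2^\omega$ (or $\omega^\omega$), and a $\mathbb P$-name $\dot x$ with $\dot x^G=x$. Since $\mathbb P=\Fn(\omega_1\times\omega,\omega,{<}\omega)$ has the countable chain condition in $L$, we replace $\dot x$ in $L$ by a nice name: for each $n<\omega$ (and each value $k$), choose a maximal antichain of conditions deciding $\dot x(n)=k$. Each antichain is countable, so the nice name $\tau$ mentions only countably many conditions. Each condition has finite domain, so the union $S$ of their domains is a countable subset of $\omega_1\times\omega$, and $S\in L$. Since $\omega_1=\omega_1^L$ is regular in $L$, there is $\alpha<\omega_1$ with $S\subseteq\alpha\times\omega$.

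Then $\tau$ is a $\mathbb P\restr(\alpha\times\omega)$-name, and its evaluation uses only $G\cap\mathbb P\restr(\alpha\times\omega)=G_{<\alpha}$. Hence $x=\tau^G=\tau^{G_{<\alpha}}\in L[G_{<\alpha}]=L[c_{<\alpha}]$. By the definition of $M$ (it contains $L$ and every $c_{<\alpha}$ for $\alpha<\omega_1$, and is a model of $\mathsf{ZF}$), we have $L[c_{<\alpha}]\subseteq M$, so $x\in M$. This gives $\R^{L[G]}=\bigcup_{\alpha<\omega_1}\R^{L[G_{<\alpha}]}\subseteq\R^M$.

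The main point needing care is the bounding step: that a single real depends on only countably many coordinates, which uses ccc in $L$ together with regularity of $\omega_1^L$. The fact that $\omega_1$ is preserved in $L[G]$ is not needed, since the bound $\alpha$ is found in $L$ from the name. One could alternatively cite \Cref{thm:truss}(2) applied to singletons, but that already presupposes each real of $M$ lies in some $L[G_{<\alpha}]$. The direction actually needed is the nice-name argument above, which is the standard proof.
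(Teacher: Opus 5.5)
Your argument is correct and is essentially the paper's proof. You pass to a nice name, use the ccc of $\mathbb P$ in $L$ to see that only countably many coordinates are involved, bound them below $\omega_1^L$ by regularity in $L$, and conclude $x\in L[G_{<\alpha}]\subseteq M$, with the reverse inclusion immediate; you also spell out details the paper leaves implicit, namely evaluation by $G_{<\alpha}$ and minimality of $L[c_{<\alpha}]$.
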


\begin{proof}
Every $\mathbb P$-name for a real is equivalent to a nice name using
countably many coordinates.  In $L$, every countable subset of
$\omega_1^L$ is bounded.  Thus every real of $L[G]$ belongs to some
$L[G_{<\alpha}]$ and hence to $M$.  The reverse inclusion is immediate.
\end{proof}

For completeness, we record a modern verification of dependent choice.

\begin{proposition}\label{prop:dc}
The model $M$ satisfies $\mathsf{DC}$.
\end{proposition}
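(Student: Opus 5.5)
We plan to derive $\mathsf{DC}$ from the structure recorded in \Cref{thm:truss}, namely $\mathsf{SVC}(\R)$ and $V=L(\R)$, and to push the real work into a statement about countable sequences of reals. The standard route has two parts. First, show that $M$ and $L[G]$ have the same $\omega$-sequences of reals; more generally, every countable sequence of elements of $M$ that lies in $L[G]$ already lies in $M$. Second, use this closure to transfer $\mathsf{DC}$ from the $\mathsf{ZFC}$ model $L[G]$ down to $M$.

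For the first step we argue with the symmetric presentation. A countable sequence $\langle x_n:n<\omega\rangle\in L[G]$ of elements of $M$ has each $x_n$ given by a hereditarily symmetric name with support some $\alpha_n<\omega_1$. By ccc of $\mathbb P$ over $L$, the sequence of names, and hence of supports, can be covered in $L$ by a countable set of candidates. So $\sup_n\alpha_n<\omega_1$ is bounded in $L$, as in \Cref{lem:same-reals}. Therefore the whole sequence has a name with support a bounded $\alpha$, and it lies in $M$; concretely, it is definable in $M$ from $c_{<\alpha}$ and ground-model parameters. For reals this is exactly the argument of \Cref{lem:same-reals} applied to names for $\omega\times\omega$-indexed reals. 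For arbitrary elements of $M$ we must check that the name for the sequence is symmetric. We expect this to be the main obstacle, since the automorphism group and filter of Truss's construction must be made explicit, e.g.\ via the normal filter generated by the subgroups fixing $\alpha\times\omega$ pointwise.

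For the second step, let $R$ be a relation on a nonempty set $S\in M$ with no terminal nodes in $M$. In $L[G]$, which satisfies $\mathsf{AC}$, pick a sequence $\langle s_n\rangle$ with $s_n R s_{n+1}$. By the first step this sequence lies in $M$, giving $\mathsf{DC}$ there. If the symmetry bookkeeping for arbitrary sets proves awkward, we instead use $\mathsf{SVC}(\R)$. Fix a surjection $g:\R\times\eta\to S$ in $M$ and pull $R$ back to a relation $R'$ on $\R\times\eta$; then $\R\times\eta$ is well-orderably indexed over $\R$. Choose in $L[G]$ an $R'$-chain $\langle(r_n,\beta_n)\rangle$. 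Its real coordinates form a single real by \Cref{lem:same-reals}, and its ordinal coordinates form a countable set of ordinals, which lies in $M$ because $M$ and $L[G]$ share a ccc-over-$L$ covering argument. Composing with $g$ then yields the required chain in $M$.

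The hardest point is the claim that countable sequences of ordinals from $L[G]$ lie in $M$. We would prove it by the same nice-name argument used in \Cref{lem:same-reals}: such a sequence is coded by countably many antichains, which involve boundedly many coordinates below $\omega_1$. The sequence therefore belongs to $L[G_{<\alpha}]\subseteq M$ for some $\alpha<\omega_1$.
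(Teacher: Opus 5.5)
Your second route, through $\mathsf{SVC}(\R)$, is correct. It differs from the paper's proof in where the ordinal coordinate of $\mathsf{SVC}(\R)$ is handled.

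The paper quotes Ryan-Smith's local characterization of $\mathsf{DC}$ under $\mathsf{SVC}(\R)$. So it only has to bring down from $L[G]$ a branch through a tree on $\R$, and \Cref{lem:same-reals} does that. You instead pull $R$ back to $\R\times\eta$ yourself and bring down both coordinates of a chain chosen in $L[G]$:
\begin{itemize}
\item the reals, by \Cref{lem:same-reals};
\item the ordinals, by the ccc bound placing every $\omega$-sequence of ordinals of $L[G]$ in some $L[G_{<\alpha}]\subseteq M$. Alternatively, cover its range by a set $C\in L$ countable in $L$, and write the sequence as an enumeration of $C$ from $L$ composed with a real.
\end{itemize}
This trades Ryan-Smith's Proposition~3.1 for model-specific information about $L[G]$ over $L$. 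It also yields more than $\mathsf{DC}$: every $\omega$-sequence in $L[G]$ of elements of $M$ belongs to $M$. To see this, place the terms in some $V^M_\gamma$ and pull back along a surjection from $\R\times\eta$.

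If you want to avoid transferring ordinals at all, require each step of the chain to use the least ordinal admitting some real witness. Then $\beta_{n+1}$ is computed in $M$ from $(r_n,\beta_n)$, using only that $M$ and $L[G]$ have the same reals. Only $\langle r_n\rangle$ has to come down, and you have in effect proved by hand the direction of Ryan-Smith's characterization that the paper uses.

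Your first route is not justified as written. Covering the names by countably many candidates in $L$ bounds the supports, but it does not give a symmetric name for the sequence. Which candidate names $x_n$ is decided by a name for a choice made in $L[G]$, and that name need not be symmetric. As you suspected, this is where the bookkeeping fails. Since your second route proves the closure claim outright, simply drop the first.
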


\begin{proof}
By \Cref{thm:truss}, $M\models\mathsf{SVC}(\R)$.  Ryan-Smith's local
characterization of dependent choice says that under $\mathsf{SVC}(\R)$,
$\mathsf{DC}$ is equivalent to the assertion that every subtree of
$\R^{<\omega}$ with no maximal node has a branch
\cite[Proposition 3.1]{RyanSmith2025}.  If such a tree $T$ belongs to $M$,
then the choice extension $L[G]$ has a branch through $T$.  A sequence of
reals is coded by one real, and by \Cref{lem:same-reals} that code
already belongs to $M$.
\end{proof}

Since countable ordinals are coded by reals,
\[
 \omega_1^M=\omega_1^{L[G]}=\omega_1^L.
\]

\begin{corollary}\label{cor:reals-not-wo}
The set $2^\omega$ is not well-orderable in $M$.
\end{corollary}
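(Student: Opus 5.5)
The plan is to use the characterization in \Cref{thm:truss}(2) and argue by contradiction, rather than proving $\BCTWO$ first (that principle is only verified later). Suppose $2^\omega$ were well-orderable in $M$. Since $2^\omega\in M$ is a set of reals (after identifying $2^\omega$ with a subset of $\R$ via a fixed constructible embedding, which is absolute), \Cref{thm:truss}(2) produces some $\alpha<\omega_1$ with
\[
 2^\omega\cap M\subseteq \R^{L[G_{<\alpha}]}.
\]
In particular, every real of $M$ lies in the intermediate extension $L[G_{<\alpha}]$.

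Next I exhibit a real of $M$ outside $L[G_{<\alpha}]$. The Cohen real $c_\alpha$ does the job. It belongs to $M$, since $M$ contains the initial segment $c_{<\alpha+1}$ and $\alpha+1<\omega_1$; alternatively, it is a real of $L[G]$, so \Cref{lem:same-reals} applies. On the other hand, $c_\alpha\notin L[G_{<\alpha}]$. The reason is the product lemma: $\mathbb P$ factors as $\Fn(\alpha\times\omega,\omega,{<}\omega)\times\Fn((\omega_1\smallsetminus\alpha)\times\omega,\omega,{<}\omega)$. The restriction of $G$ to the second factor is generic over $L[G_{<\alpha}]$, so by genericity $c_\alpha$ differs from every real of $L[G_{<\alpha}]$ (each ground real is avoided on a dense set). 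Replacing $c_\alpha$ by its coordinatewise parity, or any fixed recursive coding into $2^\omega$, gives an element of $2^\omega$ with the same property. This contradicts the displayed inclusion.

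The only step requiring care is the identification of $2^\omega$ with a set of reals in the sense of \Cref{thm:truss}(2), together with the standard mutual genericity fact. I expect neither to be a real obstacle.

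A shorter alternative is also available. Any well-order of $2^\omega$ in $M$ would make $2^\omega$ well-orderable, which contradicts \Cref{thm:truss}(3) combined with the same bounding. However, the direct argument above is the cleanest.
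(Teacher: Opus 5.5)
Your argument is correct and is the paper's proof: \Cref{thm:truss}(2) would place every real of $M$ in a single $L[G_{<\alpha}]$, and the fresh Cohen coordinate $c_\alpha$ refutes this (you additionally justify $c_\alpha\notin L[G_{<\alpha}]$ via the product lemma). Your closing ``shorter alternative'' is circular as written and should be dropped, but the main argument does not depend on it.
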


\begin{proof}
If all reals were well-orderable, \Cref{thm:truss} would put them in
$L[G_{<\alpha}]$ for some $\alpha<\omega_1$, contrary to the fresh
coordinate $c_\alpha$.
\end{proof}
When comparing a Polish space across intermediate extensions, we work with a fixed countable presentation and code open sets relative to the associated countable basis.
Note that the denseness of an open set and the nowhere-denseness of a closed set is absolute in the codes between the intermediate extensions considered here.

\begin{theorem}\label{thm:model-principles}
The model $M$ satisfies
\[
\mathsf{ZF}+\mathsf{DC}+\mathsf{AC}_{\WO}+\TP+\BCTWO.
\]
\end{theorem}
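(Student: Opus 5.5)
Most of the conjuncts are already in hand: $\mathsf{ZF}$ and $\mathsf{AC}_{\WO}$ are \Cref{thm:truss}(1), $\mathsf{DC}$ is \Cref{prop:dc}, and $\TP$ is \Cref{thm:truss}(3). So the proof reduces to verifying $\BCTWO$ in $M$. The plan is to push a well-orderable family of dense open sets into a single bounded intermediate extension $L[G_{<\alpha}]$ and then use a Cohen real added above $\alpha$ as a generic point of the intersection.

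First I fix, in $M$, a perfect Polish space $X$ with a presentation coded by a real, together with a well-orderable family $\mathcal D$ of dense open subsets of $X$ and a nonempty basic open $U$. Each dense open set is determined by its code, the set of basic open sets it contains, which is a real. By $\mathsf{AC}_{\WO}$, or more simply because the coding map is canonical once the presentation is fixed, the family of codes $\{\ulcorner D\urcorner : D\in\mathcal D\}$ is a well-orderable set of reals in $M$. By \Cref{thm:truss}(2) there is $\alpha<\omega_1$ with all these codes, and the code of $X$, lying in $\R^{L[G_{<\alpha}]}$; the presentation code is a single real, so enlarging $\alpha$ handles it.

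Next, work in $L[G_{<\alpha}]$, where $\mathsf{ZFC}$ holds. The real $c_\alpha$ is Cohen-generic over $L[G_{<\alpha}]$ by the product lemma. Inside $L[G_{<\alpha}]$ there is a continuous open surjection, or at least a homeomorphism, from $\omega^\omega$ onto a dense $G_\delta$ subset of $U$; alternatively one can directly use Cohen forcing on the basic-open-set tree below $U$, which is countable and hence forcing-equivalent to $\Fn(\omega,\omega)$. From $c_\alpha$ this yields a point $x\in U$ that is generic over $L[G_{<\alpha}]$ for the poset of nonempty basic open subsets of $U$. For each $D\in\mathcal D$ the set of basic opens with closure inside (the code of) $D$ is dense and lies in $L[G_{<\alpha}]$, so genericity puts $x$ in the reinterpretation of $D$. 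Density of open sets is absolute between models with the same codes, as noted before the theorem, and the reinterpreted open set in $L[G]$ agrees with $D$ in $M$ because $M$ and $L[G]$ have the same reals (\Cref{lem:same-reals}). Since $x$ is a real in $L[G_{<\alpha+1}]\subseteq M$, $x\in U\cap\bigcap\mathcal D$ holds in $M$, which gives $\BCTWO$.

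The main obstacle is the bookkeeping in the first step. One has to make sure that the family of codes is genuinely a well-orderable set of reals in $M$, which uses the canonical code map composed with the well-ordering of $\mathcal D$. One also has to transfer genericity from $c_\alpha\in\omega^\omega$ to a point of the arbitrary space $X$. I would handle the latter by defining the generic filter on the countable tree poset below $U$ from $c_\alpha$ via an isomorphism of dense subposets constructed in $L[G_{<\alpha}]$, and then taking the limit point exactly as in the second half of \Cref{thm:bct-rs}.
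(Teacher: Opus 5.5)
Your proposal is correct and is essentially the paper's proof. The remaining conjuncts come from \Cref{thm:truss} and \Cref{prop:dc}; for $\BCTWO$ you use \Cref{thm:truss}(2) to bound the codes of the members of $\mathcal D$ (and the presentation of $X$) inside some $L[G_{<\alpha}]$, and then use the fresh Cohen real $c_\alpha$ to get a point of $U$ that is generic over $L[G_{<\alpha}]$ for the countable poset of basic open sets, which therefore lies in every $D\in\mathcal D$. One detail to state explicitly: that poset is atomless because $X$ has no isolated points, and this, not countability alone, is what makes it forcing-equivalent to Cohen forcing.
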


\begin{proof}
Only $\BCTWO$ remains to be proved.
Let $X$ be a perfect Polish space in $M$, and let $\mathcal D$ be a well-orderable family of dense open subsets of $X$.
Fix a complete compatible metric and a countable dense sequence for $X$, and let $\langle U_n:n<\omega\rangle$ be the resulting countable basis.
For $D\in\mathcal D$, code $D$ by
\[
d_D=\{n<\omega:U_n\subseteq D\}.
\]
The set $\{d_D:D\in\mathcal D\}$ is a well-orderable set of reals, so by \Cref{thm:truss} there is $\alpha<\omega_1$ such that every $d_D$ belongs to $L[G_{<\alpha}]$.
Increasing $\alpha$ if necessary, we may also assume that the chosen presentation of $X$ belongs to $L[G_{<\alpha}]$.

Let $U$ be a nonempty basic open subset of $X$.
Over $L[G_{<\alpha}]$, the forcing of nonempty basic open subsets of $U$ ordered by reverse inclusion has a countable atomless dense suborder, and hence is forcing-equivalent to Cohen forcing.
The fresh coordinate $c_\alpha$ therefore yields a point $x\in U$ which is Cohen-generic for the topology of $X$ over $L[G_{<\alpha}]$.
Since each $d_D$ belongs to $L[G_{<\alpha}]$ and codes a dense open set there, $x\in D$ for every $D\in\mathcal D$.
Thus
\[
U\cap\bigcap\mathcal D\neq\varnothing.
\]
Since $U$ was arbitrary, $\bigcap\mathcal D$ is dense in $X$.
\end{proof}

By \Cref{thm:indexed-baire,thm:model-principles}, in $M$ a set $A\subseteq2^\omega$ is well-orderable if and only if every $A$-indexed family of dense open subsets of any nonempty perfect Polish space has dense intersection.
By \Cref{cor:indexed-rs}, this is also equivalent to the following: for every nonempty countable partial order $Q$, every $A$-indexed family of dense subsets of $Q$ is met by a single filter.

\begin{remark}\label{rem:model-hartogs}
The Hartogs number of $\R$ in $M$ is $\omega_2^M$.  Indeed, every
well-orderable set of reals is contained in $\R^{L[G_{<\alpha}]}$ for some
$\alpha<\omega_1$.  The forcing which adds $G_{<\alpha}$ is countable in
$L$, so this intermediate extension satisfies the continuum hypothesis;
hence every such set injects into $\omega_1$.  Conversely, assigning to each
countable ordinal its least constructible real code gives an injection
$\omega_1\to\R\cap L$.  Thus, within $M$, $\BCTWO$ is the Baire-category
assertion for families of at most $\aleph_1$ dense open sets, and $\RSWO$ is
the countable-poset fragment of $\mathsf{MA}_{\aleph_1}$.  We retain the
well-orderable-family formulation because the abstract results do not assume
this model-specific cardinal calculation.
\end{remark}

We also need the following standard fact about Cohen reals.

\begin{lemma}\label{lem:no-dominating-real}
For every $\alpha<\omega_1$ and every
$g\in\omega^\omega\cap L[G_{<\alpha}]$, there is
$f\in\omega^\omega\cap L$ such that $f\not\leq^*g$.
\end{lemma}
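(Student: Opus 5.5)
The plan is to use the fresh Cohen real $c_\alpha$, which is generic over $L[G_{<\alpha}]$ by the product lemma and lies in $L[G]$. Fix $\alpha<\omega_1$ and $g\in\omega^\omega\cap L[G_{<\alpha}]$. It suffices to produce a single $f\in L$ with $f\not\leq^* g$. I would do this by a density argument carried out in $L$, comparing against a name for $g$.

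\textbf{Step 1: reduce to a countable Cohen name.} The forcing adding $G_{<\alpha}$ is $\Fn(\alpha\times\omega,\omega,{<}\omega)$. Since $\alpha$ is countable in $L$, this poset is countable in $L$ and hence forcing-equivalent to Cohen forcing $\mathbb C$. So I may assume $g=\dot g^{H}$, where $H$ is $\mathbb C$-generic over $L$ and $\dot g\in L$ is a nice $\mathbb C$-name for an element of $\omega^\omega$.

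\textbf{Step 2: define $f$ in $L$.} Enumerate $\mathbb C=\{p_k:k<\omega\}$ in $L$. For each $k$ and $n$, the condition $p_k$ has some extension deciding $\dot g(n)$; let $h_k(n)$ be the value decided by the first such extension in the enumeration. Each $h_k$ is defined in $L$, so $h_k\in L$. Now set
\[
 f(n)=1+\max_{k\leq n}h_k(n).
\]
Then $f\in L$, and for each fixed $k$ we have $f(n)>h_k(n)$ for all $n\ge k$.

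\textbf{Step 3: genericity.} Suppose toward a contradiction that some $p$ and some $m$ satisfy $p\Vdash \forall n\ge m\ \check f(n)\le\dot g(n)$. Write $p=p_k$, choose $n\ge\max(k,m)$, and let $q\le p_k$ be the extension used to define $h_k(n)$. Then $q\Vdash\dot g(n)=h_k(n)<f(n)$, which contradicts $q\le p$. Hence no condition forces $f\le^*\dot g$. Because every condition forces either $f\le^*\dot g$ or its negation in some extension, this means $1\Vdash f\not\le^*\dot g$, and therefore $f\not\le^* g$.

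The only delicate point is Step 1, namely passing from $\Fn(\alpha\times\omega,\omega,{<}\omega)$ to $\mathbb C$. But Step 2 only needs a countable poset with an enumeration in $L$, so I can run the argument directly on $\Fn(\alpha\times\omega,\omega,{<}\omega)$ and avoid the equivalence altogether. With that, the obstacle is minor, and the result is the standard fact that Cohen forcing adds no real dominating the ground-model reals.
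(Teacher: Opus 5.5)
Your proof is correct and is essentially the paper's argument: a density argument carried out in $L$ over the countable poset $\Fn(\alpha\times\omega,\omega,{<}\omega)$. Your diagonal maximum over the $h_k$ replaces the paper's choice of distinct coordinates $n_i\geq N_i$ for pairs $(q_i,N_i)$ below a fixed condition, and it yields $1\Vdash\check f\not\leq^*\dot g$ outright, even for $\alpha=0$, where Step~1's Cohen-equivalence claim fails but your direct route still works; the opening appeal to $c_\alpha$ is never used and can be dropped.
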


\begin{proof}
The case $\alpha=0$ is immediate.  For $\alpha>0$, the forcing
$\Fn(\alpha\times\omega,\omega,{<}\omega)$ is countable in $L$.  Let
$\dot g$ be a name for a member of $\omega^\omega$ and let $p$ be a
condition.  Enumerate all pairs $(q,N)$ with $q\leq p$ and $N<\omega$ as
$\langle(q_i,N_i):i<\omega\rangle$.  Recursively choose distinct
$n_i\geq N_i$, an extension $r_i\leq q_i$, and $m_i<\omega$ such that
\[
 r_i\Vdash\dot g(n_i)=m_i.
\]
Define $f\in\omega^\omega\cap L$ by $f(n_i)=m_i+1$ and $f(n)=0$ elsewhere.
For every $q\leq p$ and $N<\omega$, some extension of $q$ forces
$f(n)>\dot g(n)$ at a coordinate $n\geq N$.  It follows that
\[
 p\Vdash\check f\not\leq^*\dot g.
\]
Applying this construction below a condition in the generic filter proves the
claim.
\end{proof}

\section{Domination, agreement, and splitting}\label{sec:diagonal}

We now apply \Cref{prop:uniform-sections} to several familiar relations.  The
results in this section, except for the boundedness theorem
\Cref{thm:boundedness-threshold}, hold in every model satisfying the stated
abstract hypotheses.

\subsection{Eventual domination}

For $g,f\in\omega^\omega$, write $g\leq^*f$ if
$g(n)\leq f(n)$ for all but finitely many $n$.  For $k<\omega$, let
\[
 R_k=\{(g,f):\forall n\geq k\;g(n)\leq f(n)\}.
\]
For fixed $f$, the section $(R_k)^f$ is closed nowhere dense.  Moreover,
$\leq^*=\bigcup_kR_k$ is total, since $g\leq^*g$.

\begin{corollary}\label{cor:nondomination}
Assume $\TP+\BCTWO$.  For every $A\subseteq2^\omega$, the following are
equivalent:
\begin{enumerate}
\item $A$ is well-orderable;
\item for every $F:A\to\omega^\omega$ there is $g\in\omega^\omega$ such
that $g\not\leq^*F(a)$ for every $a\in A$.
\end{enumerate}
Consequently, every dominating family in $\omega^\omega$ is
non-well-orderable.
\end{corollary}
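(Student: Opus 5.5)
The plan is to apply \Cref{prop:uniform-sections} directly, with $X=\omega^\omega$, $Y=\omega^\omega$, and the relation $R={\leq^*}=\bigcup_{k<\omega}R_k$ set up just before the statement. We first check the hypotheses. The space $\omega^\omega$ is a perfect Polish space. The set $Y=\omega^\omega$ is a surjective image of $2^\omega$ by \Cref{lem:polish-coding}; one can also take an explicit choice-free surjection, for instance decoding the lengths of successive blocks of $1$'s, with a fixed default value when the real has only finitely many $0$'s. The relation is total, since $g\leq^*g$.

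The remaining hypothesis is that each section $(R_k)^f=\{g:\forall n\geq k\;g(n)\leq f(n)\}$ is closed nowhere dense. It is closed because it is an intersection of clopen conditions on single coordinates. It is nowhere dense because any basic open set $[s]$ can be extended to $[s^\frown t]$ with some coordinate $n\geq\max(k,|s|)$ taking the value $f(n)+1$, and this smaller basic set misses the section. The representation is fixed, so no choice is needed, and \Cref{prop:uniform-sections} then gives the equivalence of (1) and (2) with $f=F$.

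For the final clause, suppose a dominating family $\mathcal F\subseteq\omega^\omega$ were well-orderable. We cannot directly take $A=\mathcal F$, because $A$ is required to be a subset of $2^\omega$. So we transfer along a fixed choice-free homeomorphism $\iota$ of $\omega^\omega$ onto the co-countable subset of $2^\omega$ consisting of sequences with infinitely many $1$'s. Then $A=\iota[\mathcal F]$ is a well-orderable subset of $2^\omega$. Setting $F=\iota^{-1}\restr A$, clause (2) yields some $g$ not dominated by any member of $\mathcal F$, which contradicts the assumption that $\mathcal F$ is dominating.

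The main potential obstacle is only bookkeeping: making sure the surjection $2^\omega\to\omega^\omega$ and the coding $\iota$ are definable without choice. Both are given by explicit block codings, so I expect no real difficulty.
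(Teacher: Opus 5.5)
Your proposal is correct and follows the paper's proof. You verify the hypotheses of \Cref{prop:uniform-sections} for $\leq^*=\bigcup_k R_k$ on $\omega^\omega$, then obtain the final clause by applying (2) to the dominating family itself; your transfer through the injection $\iota:\omega^\omega\to2^\omega$ only makes explicit the identification of $\omega^\omega$ with a subset of $2^\omega$ that the paper's ``identity map'' leaves implicit.
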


\begin{proof}
Apply \Cref{prop:uniform-sections}.  For the final assertion, apply (2) to
the identity map on an alleged well-orderable dominating family.
\end{proof}

In Truss's model, universal boundedness has a different characterization.
Since $L\models\mathsf{CH}$, fix a sequence
$\langle d_\xi:\xi<\omega_1\rangle$ in $L$ which is dominating in
$\omega^\omega\cap L$.

\begin{theorem}\label{thm:boundedness-threshold}
In $M$, for every $A\subseteq2^\omega$, the following are equivalent:
\begin{enumerate}
\item $A$ is countable;
\item for every $F:A\to\omega^\omega$ there is $g\in\omega^\omega$ such
that $F(a)\leq^*g$ for every $a\in A$.
\end{enumerate}
\end{theorem}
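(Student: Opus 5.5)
For (1)$\Rightarrow$(2) I will use $\mathsf{DC}$ (\Cref{prop:dc}) in the usual way. If $A$ is countable, enumerate it as $\langle a_n:n<\omega\rangle$ and set
\[
g(k)=\max_{n\le k}F(a_n)(k).
\]
Then $F(a_n)\leq^*g$ for every $n$. Only an enumeration of $A$ is needed, and this exists in $\mathsf{ZF}$ by our convention on countability.

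The substantive direction is (2)$\Rightarrow$(1), which I will prove by contraposition. Assume $A$ is uncountable. There are two cases.

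\emph{Case 1: $A$ is not well-orderable.} By $\TP$ (\Cref{thm:truss}(3)) choose a perfect $P\subseteq A$ and a homeomorphism $h:P\to2^\omega$. Fix a continuous surjection $\sigma:2^\omega\to\omega^\omega$ onto a closed copy, or better, a surjection onto $\omega^\omega$ given by \Cref{lem:polish-coding}. Define $F(a)=\sigma(h(a))$ on $P$ and $F(a)=0$ off $P$. Then $F[A]=\omega^\omega$. A bound $g$ would have to satisfy $g+1\leq^*g$, which is absurd.

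\emph{Case 2: $A$ is well-orderable but uncountable.} By \Cref{rem:model-hartogs}, $A$ injects into $\omega_1$, and since $A$ is uncountable it has cardinality exactly $\aleph_1$ in $M$. Fix a bijection $e:A\to\omega_1$, which exists because $A$ is well-orderable; the bijection lies in $M$. Put $F(a)=d_{e(a)}$, where $\langle d_\xi:\xi<\omega_1\rangle\in L\subseteq M$ is the fixed sequence dominating $\omega^\omega\cap L$. Suppose $g\in\omega^\omega\cap M$ bounds every $d_\xi$. By \Cref{lem:same-reals}, $g$ lies in $L[G]$, and by the nice-name argument it lies in some $L[G_{<\alpha}]$ with $\alpha<\omega_1$. \Cref{lem:no-dominating-real} then gives $f\in\omega^\omega\cap L$ with $f\not\leq^*g$. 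But $f\leq^*d_\xi$ for some $\xi$, and $d_\xi\leq^*g$, so $f\leq^*g$, a contradiction.

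The main point to check is Case 2. There I need that $A$ really admits a bijection with $\omega_1$ inside $M$. Well-orderability gives an order type that is an uncountable ordinal $\leq\omega_1$ via the injection into $\omega_1$, and every uncountable subset of $\omega_1$ has order type $\omega_1$, so $A$ has order type exactly $\omega_1$. I also need the fact from \Cref{lem:same-reals}'s proof that every real of $M$ lies in a bounded $L[G_{<\alpha}]$. Case 1 only uses that the image of $F$ is all of $\omega^\omega$, which is unbounded.
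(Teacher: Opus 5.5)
Your proof is correct and follows the paper's argument. For countable $A$ you use the same diagonal-max bound. For non-well-orderable $A$ you use a surjection from a perfect subset onto $\omega^\omega$. For uncountable well-orderable $A$ you use the constructible dominating sequence $\langle d_\xi:\xi<\omega_1\rangle$ together with \Cref{lem:no-dominating-real}.

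The differences are cosmetic. The paper takes a subset of $A$ of order type $\omega_1$, which any uncountable well-orderable set has, rather than invoking \Cref{rem:model-hartogs} to get a bijection. $\mathsf{DC}$ is not needed for (1)$\Rightarrow$(2), though the empty case should be handled separately. Drop the ``continuous surjection'' alternative in Case~1: no continuous map from the compact space $2^\omega$ onto $\omega^\omega$ exists, and the Borel surjection from \Cref{lem:polish-coding} is what the argument needs.
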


\begin{proof}
The empty case is trivial.  If $A$ is nonempty and countable, enumerate it
with repetitions as $A=\{a_i:i<\omega\}$ and put
\[
 g(n)=\max\{F(a_i)(n):i\leq n\}.
\]
Then $g$ eventually dominates every $F(a_i)$.

Conversely, suppose first that $A$ is well-orderable but uncountable.  It has
a subset $\{a_\xi:\xi<\omega_1\}$.  Define
$F(a_\xi)=d_\xi$ and extend $F$ arbitrarily to the rest of $A$.  If some
$g\in M$ bounded the image, then $g$ would dominate every constructible
function, contrary to \Cref{lem:no-dominating-real}.

If $A$ is not well-orderable, choose a perfect $P\subseteq A$.  By
\Cref{lem:polish-coding}, there is a surjection $P\to\omega^\omega$.
Extend it to a function on $A$.  Its range is all of $\omega^\omega$, which
is not eventually bounded by any one function.
\end{proof}

Thus countability characterizes index sets whose every image is bounded,
whereas well-orderability characterizes index sets whose every image is
non-dominating.  In particular, an uncountable well-orderable family may be
unbounded, but it cannot be dominating.

\subsection{Infinite agreement and arbitrary covers}

Write $g\meq f$ if $g(n)=f(n)$ for infinitely many $n$.  Let
\[
 E_k=\{(g,f):\forall n\geq k\;g(n)\neq f(n)\}.
\]
Each section $(E_k)^f$ is closed nowhere dense, and the relation of being
eventually different, $\bigcup_kE_k$, is total: $g$ is eventually different
from $n\mapsto g(n)+1$.

\begin{corollary}[Infinite agreement]\label{cor:agreement}
Assume $\TP+\BCTWO$.  For every $A\subseteq2^\omega$, the following are
equivalent:
\begin{enumerate}
\item $A$ is well-orderable;
\item for every $F:A\to\omega^\omega$ there is $g\in\omega^\omega$ such
that $g\meq F(a)$ for every $a\in A$.
\end{enumerate}
\end{corollary}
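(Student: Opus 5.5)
The plan is to apply \Cref{prop:uniform-sections} with $X=Y=\omega^\omega$ and the relation $E=\bigcup_k E_k$ of eventual difference. The space $\omega^\omega$ is a perfect Polish space, and by \Cref{lem:polish-coding} there is a surjection $\pi:2^\omega\to\omega^\omega$. So the hypotheses on $Y$ are met, and it remains to check the two structural hypotheses on $E$.

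First, each section $(E_k)^f=\{g:\forall n\geq k\; g(n)\neq f(n)\}$ is closed, since it is an intersection of clopen conditions on single coordinates. It is also nowhere dense: any basic open set $[s]$ can be extended at a coordinate $n\geq\max(k,|s|)$ by setting $g(n)=f(n)$, which lands in a basic open set disjoint from $(E_k)^f$. Second, $E$ is total, as already noted, because $g$ is eventually different from $n\mapsto g(n)+1$.

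Now apply \Cref{prop:uniform-sections}. It shows that $A$ is well-orderable iff for every $F:A\to\omega^\omega$ there is $g$ with $\neg(g\,E\,F(a))$ for all $a\in A$. The key step is to unwind the negation: $\neg(g\,E\,F(a))$ says that for every $k$ there is $n\geq k$ with $g(n)=F(a)(n)$, which is exactly $g\meq F(a)$. This gives (1)$\Leftrightarrow$(2). The only genuine check is the nowhere-density computation above; the rest is bookkeeping.
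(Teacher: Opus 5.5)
Your proposal is correct and matches the paper's proof, which is the one-line instruction to apply \Cref{prop:uniform-sections} to the eventually-different relation $\bigcup_k E_k$; the paper notes just before the corollary that the sections $(E_k)^f$ are closed nowhere dense and that the relation is total. You supply the nowhere-density check and the unwinding of $\neg(g\,E\,F(a))$ into $g\meq F(a)$, which the paper leaves to the reader.
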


\begin{proof}
Apply \Cref{prop:uniform-sections} to the eventually-different relation.
\end{proof}

Under the axiom of choice, Miller proved that the least size of a family for
which (2) fails is the least cardinal at which the Baire category theorem
fails \cite{Miller1982}.  We shall use the following elementary reformulation.

\begin{lemma}\label{lem:cover-selection}
For any set $A$, the following are equivalent in $\mathsf{ZF}$:
\begin{enumerate}
\item for every $F:A\to\omega^\omega$ there is $g\in\omega^\omega$ which
is infinitely equal to every member of $F[A]$;
\item whenever
\[
 A=\bigcup_{m<\omega}U_{n,m}\qquad(n<\omega)
\]
is a sequence of enumerated countable covers by arbitrary subsets of $A$,
there is $g\in\omega^\omega$ such that
\[
 \forall a\in A\;\exists^\infty n\quad a\in U_{n,g(n)}.
\]
\end{enumerate}
\end{lemma}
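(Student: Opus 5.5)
The proof is a pair of explicit, choice-free translations between a function $F:A\to\omega^\omega$ and a sequence of enumerated covers of $A$. Each direction pulls back the witness $g$ unchanged, so no selection from families of sets is ever needed.

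For (1)$\Rightarrow$(2), take a sequence of enumerated covers $A=\bigcup_{m<\omega}U_{n,m}$, one for each $n<\omega$. Define $F:A\to\omega^\omega$ by
\[
 F(a)(n)=\min\{m<\omega: a\in U_{n,m}\}.
\]
This is well defined in $\mathsf{ZF}$ because the covers come with their enumerations; no choice is used. Apply (1) to get $g$ infinitely equal to every $F(a)$. For each $a$ there are infinitely many $n$ with $g(n)=F(a)(n)$, and at each such $n$ we have $a\in U_{n,F(a)(n)}=U_{n,g(n)}$. That is exactly the conclusion of (2).

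For (2)$\Rightarrow$(1), take $F:A\to\omega^\omega$ and define
\[
 U_{n,m}=\{a\in A:F(a)(n)=m\}.
\]
For each $n$ these sets cover $A$ and are enumerated by $m$; some may be empty, which (2) allows since the covers are by arbitrary subsets. Statement (2) gives $g$ such that every $a$ lies in $U_{n,g(n)}$ for infinitely many $n$. Unwinding the definition, $F(a)(n)=g(n)$ for infinitely many $n$, so $g\meq F(a)$.

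No step here is a real obstacle. The one point to check is that nothing hides a use of choice: the definitions of $F$ and of the $U_{n,m}$ are explicit from the data, and the formulation of (2) with enumerated covers is exactly what makes this possible. If (2) had instead quantified over merely countable covers, we would need countable choice to pick the enumerations.
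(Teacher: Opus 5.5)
Your proposal is correct and is the same argument as the paper's: $F(a)(n)=\min\{m:a\in U_{n,m}\}$ in one direction and $U_{n,m}=\{a\in A:F(a)(n)=m\}$ in the other. Your remark that the explicit enumerations in (2) are what keep the argument choice-free is accurate.
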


\begin{proof}
Given the covers, put
\[
 F(a)(n)=\min\{m:a\in U_{n,m}\}.
\]
A function infinitely equal to every $F(a)$ gives the required selections.
Conversely, given $F$, apply (2) to
$U_{n,m}=\{a\in A:F(a)(n)=m\}$.
\end{proof}

\begin{corollary}\label{cor:cover-selection}
Assume $\TP+\BCTWO$.  For $A\subseteq2^\omega$, $A$ is well-orderable if
and only if it satisfies the selection property in
\Cref{lem:cover-selection}(2).
\end{corollary}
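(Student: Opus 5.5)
The plan is to chain \Cref{cor:agreement} with \Cref{lem:cover-selection}, both already available. \Cref{lem:cover-selection} is a $\mathsf{ZF}$ equivalence, valid for any set $A$, between its clause (1), that every $F:A\to\omega^\omega$ admits a single $g$ infinitely equal to each member of $F[A]$, and its clause (2), the selection property for sequences of enumerated countable covers. So for $A\subseteq2^\omega$ the selection property holds if and only if clause (1) of \Cref{lem:cover-selection} holds. Under $\TP+\BCTWO$, \Cref{cor:agreement} says clause (1) holds exactly when $A$ is well-orderable. Composing the two equivalences gives the corollary.

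It remains to check that the two formulations really match. Clause (2) of \Cref{cor:agreement} asks for $g$ with $g\meq F(a)$ for every $a\in A$. Clause (1) of \Cref{lem:cover-selection} asks for $g$ infinitely equal to every member of $F[A]$. These are literally the same requirement, since the members of $F[A]$ are exactly the values $F(a)$.

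There is one point to verify in the proof of \Cref{lem:cover-selection}: defining $F(a)(n)=\min\{m:a\in U_{n,m}\}$ must use no choice. It does not, because each row $\langle U_{n,m}:m<\omega\rangle$ is given as an enumerated cover of $A$, so the minimum exists and is canonical. The converse direction defines the sets $U_{n,m}$ explicitly from $F$, again with no choice.

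No step is a real obstacle. The substance lies in \Cref{cor:agreement}, which in turn rests on \Cref{prop:uniform-sections} applied to the eventually-different relation, and all of that is already established.
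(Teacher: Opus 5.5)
Your proposal is correct and is the intended argument. The paper gives no separate proof and treats the corollary as the immediate composition of \Cref{cor:agreement} with the $\mathsf{ZF}$ equivalence of \Cref{lem:cover-selection}; your checks that the two infinite-agreement formulations coincide and that the translation uses no choice are the only bookkeeping needed.
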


\subsection{Simultaneous splitting}

For $x\subseteq\omega$ and $y\in[\omega]^\omega$, say that $x$ \emph{splits}
$y$ if both $x\cap y$ and $y\smallsetminus x$ are infinite.  Let $xRy$ mean that
$x$ does not split $y$.  Then
\[
 xRy\quad\Longleftrightarrow\quad
 y\subseteq^*x\ \lor\ y\subseteq^*(\omega\smallsetminus x).
\]
For $k<\omega$, each of the sets
\[
 \{x:\forall n\in y\smallsetminus k\;(n\in x)\},
 \qquad
 \{x:\forall n\in y\smallsetminus k\;(n\notin x)\}
\]
is closed nowhere dense in $2^\omega$, and their countable union is the
non-splitting section.  The relation is total: for every $x\subseteq\omega$,
either $x$ or its complement is infinite, and an infinite subset of that side
is not split by $x$.

\begin{corollary}[Simultaneous splitting]\label{cor:splitting}
Assume $\TP+\BCTWO$.  For every $A\subseteq2^\omega$, the following are
equivalent:
\begin{enumerate}
\item $A$ is well-orderable;
\item every function $F:A\to[\omega]^\omega$ has a common splitter.
\end{enumerate}
Consequently, every reaping family is non-well-orderable.
\end{corollary}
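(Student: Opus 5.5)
We will apply \Cref{prop:uniform-sections}, as in \Cref{cor:nondomination,cor:agreement}. Take $X=2^\omega$, identified with $\mathcal P(\omega)$, which is a perfect Polish space. Take $Y=[\omega]^\omega$, and let $R$ be the non-splitting relation: $xRy$ if and only if $x$ does not split $y$.

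\emph{Surjection onto $Y$.} We need a surjection $\pi:2^\omega\to Y$. Since $[\omega]^\omega$ is a $G_\delta$ subset of $2^\omega$, it is a nonempty Polish space, so \Cref{lem:polish-coding} supplies one. It can also be written down directly. Send $z$ to itself when $z$ is infinite. When $z$ is finite, send it to a fixed infinite set, or to $\omega\smallsetminus z$.

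\emph{Uniform representation.} Index the pieces by $n=(k,i)\in\omega\times2$:
\[
 R_{(k,0)}=\{(x,y):y\smallsetminus k\subseteq x\},\qquad
 R_{(k,1)}=\{(x,y):(y\smallsetminus k)\cap x=\varnothing\}.
\]
Then $R=\bigcup_n R_n$, because $x$ fails to split $y$ exactly when $y\subseteq^*x$ or $y\subseteq^*\omega\smallsetminus x$. Each section $(R_n)^y$ is closed, since it fixes the values of $x$ on the coordinates in $y\smallsetminus k$. It is nowhere dense because $y\smallsetminus k$ is infinite: any basic clopen set can be extended on a coordinate of $y\smallsetminus k$ beyond its domain so as to violate the condition. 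Totality was checked before the statement. Given $x$, the set $x$ or its complement is infinite, and that infinite set is not split by $x$.

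\emph{Conclusion.} \Cref{prop:uniform-sections} now says the following: $A$ is well-orderable if and only if, for every $F:A\to[\omega]^\omega$, there is $x$ with $\neg(xRF(a))$ for all $a\in A$. Such an $x$ splits every $F(a)$, so it is a common splitter, which is (2). For the final assertion, suppose $\mathcal R\subseteq[\omega]^\omega$ is a reaping family, meaning no single $x$ splits all its members, and suppose $\mathcal R$ is well-orderable. Identify $\mathcal R$ with a subset of $2^\omega$ and apply (2) to the inclusion map. This yields a common splitter, contradicting the definition of a reaping family. I expect no real obstacle. The only point needing care is that the representation $R=\bigcup_n R_n$ is fixed uniformly in $y$, so that $\mathsf{AC}_{\WO}$ is not needed; the displayed definitions above make this explicit.
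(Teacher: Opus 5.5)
Your proposal is correct and is essentially the paper's argument. The paper likewise applies \Cref{prop:uniform-sections} to the non-splitting relation, using the closed nowhere dense pieces $\{x:y\smallsetminus k\subseteq x\}$ and $\{x:(y\smallsetminus k)\cap x=\varnothing\}$, and gets the reaping consequence by applying (2) to the identity map on a putative well-orderable reaping family. Your explicit surjection $2^\omega\to[\omega]^\omega$ and the nowhere-density check simply fill in details the paper leaves implicit.
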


\begin{proof}
Apply \Cref{prop:uniform-sections} to the non-splitting relation.
\end{proof}

\begin{remark}\label{rem:perfect-witnesses}
Under $\TP$, the last assertions of \Cref{cor:nondomination,cor:splitting}
can be strengthened.  After the standard coding into $2^\omega$, every dominating or reaping family contains a Cantor set and is therefore in bijection with $2^\omega$.  This observation is confined to these real-coded examples; the abstract parameter set $Y$ in \Cref{thm:meager-section} carries no topology.
\end{remark}

\section{Covering and smallness properties}\label{sec:smallness}

A space $X$ is \emph{Rothberger} if, for every sequence
$\langle\mathcal U_n:n<\omega\rangle$ of open covers of $X$, one can choose
$U_n\in\mathcal U_n$ so that $X=\bigcup_nU_n$.

\begin{lemma}\label{lem:countable-subcover}
Assume $\mathsf{AC}_\omega$.  Every open cover of a subspace of a
second-countable space has a countable subcover.
\end{lemma}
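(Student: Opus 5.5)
The plan is the standard Lindelöf argument, with $\mathsf{AC}_\omega$ used exactly once to choose witnesses for countably many basic open sets. Let $Z$ be a second-countable space, fix a countable base $\langle B_n:n<\omega\rangle$, and let $Y\subseteq Z$ carry the subspace topology. Let $\mathcal U$ be an open cover of $Y$. Here "open" means relatively open; each member has the form $V\cap Y$ with $V$ open in $Z$, and $\langle B_n\cap Y:n<\omega\rangle$ is a countable base for $Y$. So we may work directly in $Y$ with the base $B'_n=B_n\cap Y$.

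Consider the set
\[
 I=\{n<\omega:\exists U\in\mathcal U\;B'_n\subseteq U\}.
\]
This set is defined without any choice. For each $n\in I$, the set $\mathcal U_n=\{U\in\mathcal U:B'_n\subseteq U\}$ is nonempty. Since $\langle\mathcal U_n:n\in I\rangle$ is a countable family of nonempty sets, $\mathsf{AC}_\omega$ gives a choice function $n\mapsto U_n\in\mathcal U_n$. If $I$ is empty, then $Y$ is empty and the empty subcover works.

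Next check that $\{U_n:n\in I\}$ covers $Y$. Given $y\in Y$, choose $U\in\mathcal U$ with $y\in U$; this is a single existential instantiation, not a choice. Since $U$ is open, there is some $n$ with $y\in B'_n\subseteq U$. Then $n\in I$, so $y\in B'_n\subseteq U_n$.

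The family $\{U_n:n\in I\}$ is the image of a subset of $\omega$, hence countable in the paper's sense. To see that it injects into $\omega$, send each member to the least index $n$ producing it; no choice is needed for this. The only subtle step is confining choice to a single application of $\mathsf{AC}_\omega$ to the family indexed by $I$. That is immediate once the cover is indexed through the base rather than through points, so I expect no real obstacle.
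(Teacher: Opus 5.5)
Your proof is correct and follows the paper's argument essentially verbatim: the paper also restricts to the basic open sets $B$ with $B\cap A$ contained in some member of the cover, and applies countable choice once to pick a witness $U_B$ for each such $B$. Your extra remarks, on the empty case and on the choice-free injection of the subcover into $\omega$, are harmless details that the paper leaves implicit.
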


\begin{proof}
Let $\mathcal B$ be a countable base and let $\mathcal U$ be an open cover of
$A$.  The family
\[
 \mathcal B'=\{B\in\mathcal B:\exists U\in\mathcal U\;(B\cap A\subseteq U)\}
\]
covers $A$.  For each $B\in\mathcal B'$, choose one corresponding $U_B$.
Only countable choice is used, and $\{U_B:B\in\mathcal B'\}$ covers $A$.
\end{proof}

\begin{proposition}\label{prop:rothberger}
Assume $\mathsf{DC}+\TP+\BCTWO$.  Every well-orderable subspace of
$2^\omega$ is Rothberger.  More strongly, from each sequence of open covers
one can select one member of each cover so that every point belongs to the
selected member for infinitely many indices.  Every finite power of a
well-orderable subspace is Rothberger.
\end{proposition}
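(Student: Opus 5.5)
The plan is to reduce everything to \Cref{cor:cover-selection} (equivalently \Cref{lem:cover-selection}) by replacing each open cover with an enumerated countable cover.

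Let $A\subseteq2^\omega$ be well-orderable and let $\langle\mathcal U_n:n<\omega\rangle$ be a sequence of open covers of $A$. By \Cref{lem:countable-subcover}, each cover has a countable subcover. Choosing these subcovers simultaneously for all $n$ should be uniform and need no extra choice: fix the countable base $\{[s]:s\in2^{<\omega}\}$ and let $\mathcal B'_n$ be the basic sets $B$ with $B\cap A$ contained in some member of $\mathcal U_n$. Choosing one witness $U_{n,B}$ for each of the countably many pairs $(n,B)$ uses only $\mathsf{AC}_\omega$, which follows from $\mathsf{DC}$. Enumerating $\mathcal B'_n$ in the order of $2^{<\omega}$ (with repetitions, so that the indexing is by all of $\omega$) gives enumerated countable covers
\[
A=\bigcup_{m<\omega}(U_{n,m}\cap A).
\]
If $A$ is empty there is nothing to do. Otherwise \Cref{cor:cover-selection}, using $\TP+\BCTWO$ and the well-orderability of $A$, yields $g\in\omega^\omega$ such that every $a\in A$ lies in $U_{n,g(n)}$ for infinitely many $n$. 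Selecting $U_{n,g(n)}\in\mathcal U_n$ then gives the strong ``infinitely often'' selection, and in particular the Rothberger property.

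For finite powers, note that $A^k$ is well-orderable, since a well-order of $A$ induces the lexicographic order on $A^k$. Also, $(2^\omega)^k$ is homeomorphic to $2^\omega$ by interleaving coordinates. The image of $A^k$ under this homeomorphism is therefore a well-orderable subspace of $2^\omega$, and the first part applies to it. The Rothberger property is a topological invariant, so $A^k$ is Rothberger.

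The only delicate step is the bookkeeping of choice: the choices of $U_{n,B}$ must be made in one countable batch so that the doubly indexed family $\langle U_{n,m}\rangle$ exists as a single object before \Cref{cor:cover-selection} is invoked. Beyond that, the argument is routine.
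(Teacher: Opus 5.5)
Your proposal is correct and follows the paper's proof essentially verbatim: you obtain countable subcovers for all $n$ at once using countable choice (from $\mathsf{DC}$), enumerate them with repetitions, and apply \Cref{cor:cover-selection}; for finite powers you use that $A^k$ is well-orderable and that $(2^\omega)^k\cong2^\omega$. Your explicit bookkeeping with the sets $\mathcal B'_n$ and the single countable batch of choices spells out what the paper compresses into one sentence citing \Cref{lem:countable-subcover}.
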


\begin{proof}
The empty case is immediate, so assume that the space is nonempty.  Since
$\mathsf{DC}$ implies countable choice, use \Cref{lem:countable-subcover}
simultaneously for the given sequence of covers and enumerate the resulting
countable subcovers, with repetitions if necessary.  Apply
\Cref{cor:cover-selection}.  Finite powers of well-orderable sets are
well-orderable, and $(2^\omega)^n$ is homeomorphic to $2^\omega$.
\end{proof}

For $A\subseteq2^\omega$, say that $A$ has \emph{strong measure zero} if for
every $f\in\omega^\omega$ there are strings $s_n\in2^{f(n)}$ such that
$A\subseteq\bigcup_n[s_n]$.  It is \emph{universally null} if
$\mu^*(A)=0$ for every atomless Borel probability measure $\mu$ on
$2^\omega$.  Let $\SN$ and $\UN$ denote the corresponding ideals.  The
Marczewski ideal $\Mar$ consists of those sets $A$ such that every perfect
$P\subseteq2^\omega$ has a perfect subset disjoint from $A$.

\begin{lemma}\label{lem:roth-sn}
Every Rothberger subset of $2^\omega$ has strong measure zero.
\end{lemma}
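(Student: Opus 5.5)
The plan is to apply the Rothberger property to one specific sequence of open covers, chosen so that a single selection reads off the strings required by strong measure zero. Fix $A\subseteq2^\omega$ Rothberger and $f\in\omega^\omega$.

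For each $n<\omega$ put
\[
 \mathcal U_n=\{[s]\cap A:s\in2^{f(n)}\}.
\]
This is a finite open cover of the subspace $A$. No choice is involved: $\mathcal U_n$ is defined outright from $f(n)$, and each member of $\mathcal U_n$ is indexed by a string in the well-ordered finite set $2^{f(n)}$.

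The Rothberger property then selects $V_n\in\mathcal U_n$ with $A=\bigcup_nV_n$. For each $n$, let $s_n$ be the lexicographically least $s\in2^{f(n)}$ with $V_n=[s]\cap A$; if $V_n$ is empty, this just picks a canonical string. This is a definable choice, so the sequence $\langle s_n:n<\omega\rangle$ exists in $\mathsf{ZF}$. Then $s_n\in2^{f(n)}$ and
\[
 A=\bigcup_nV_n\subseteq\bigcup_n[s_n],
\]
which is exactly strong measure zero for $f$.

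The only step needing care is that passing from the selected open sets to strings uses no choice, and the lexicographic least-witness rule settles it. The case $A=\varnothing$ is trivial.
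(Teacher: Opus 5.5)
Your proof is correct and is essentially the paper's argument: apply the Rothberger property to the finite covers $\{[s]:s\in2^{f(n)}\}$. The only difference is cosmetic. You use the relative traces $[s]\cap A$ and recover the strings choice-freely by a lexicographically least witness, whereas the paper uses the sets $[s]$ themselves, from which each $s_n$ is read off directly.
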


\begin{proof}
Given $f\in\omega^\omega$, apply the Rothberger property to the finite open
covers $\{[s]:s\in2^{f(n)}\}$.
\end{proof}

\begin{lemma}\label{lem:sn-un}
In $\mathsf{ZF}+\mathsf{DC}$, every strong measure zero subset of $2^\omega$
is universally null.
\end{lemma}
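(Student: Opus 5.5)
The plan is the classical argument: a strong measure zero set is universally null, checking that only $\mathsf{DC}$ (in fact only countable choice) is used. Fix $A\in\SN$ and an atomless Borel probability measure $\mu$ on $2^\omega$, and fix $\varepsilon>0$. The goal is a countable union of basic clopen sets $[s_n]$ covering $A$ with total $\mu$-measure below $\varepsilon$. Then $\mu^*(A)\le\varepsilon$ for every $\varepsilon$, so $\mu^*(A)=0$.

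The key step is a uniform smallness estimate for basic clopen sets. For each $n$ we want a length $f(n)$ such that every $s\in2^{f(n)}$ satisfies $\mu([s])<\varepsilon 2^{-n-1}$.

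1. Let $m_k=\max\{\mu([s]):s\in2^k\}$. This is a maximum over finitely many reals, so no choice is needed, and $m_k$ is nonincreasing in $k$.
2. Show $m_k\to0$ using that $\mu$ is atomless. Suppose instead $m_k\ge\delta>0$ for all $k$. The tree of strings $s$ with $\mu([s])\ge\delta$ is finitely branching and infinite.
3. By K\"onig's lemma for subtrees of $2^{<\omega}$, which is choice-free since we take the leftmost viable extension, this tree has a branch $x$. Then $\mu(\{x\})=\lim_k\mu([x\restr k])\ge\delta$ by countable additivity, contradicting atomlessness.
4. Define $f(n)$ as the least $k$ with $m_k<\varepsilon2^{-n-1}$. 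This $f$ is explicitly definable from $\mu$ and $\varepsilon$.

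Now apply strong measure zero of $A$ to $f$. This gives strings $s_n\in2^{f(n)}$ with $A\subseteq\bigcup_n[s_n]$. Then
\[
\mu\Bigl(\bigcup_n[s_n]\Bigr)\le\sum_n\mu([s_n])<\sum_n\varepsilon2^{-n-1}=\varepsilon.
\]
Countable subadditivity of $\mu$ on Borel sets is available in $\mathsf{ZF}$ for a given sequence of clopen sets. So $\mu^*(A)\le\varepsilon$ for each $\varepsilon$, and hence $\mu^*(A)=0$.

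The main obstacle is foundational rather than combinatorial: making sense of ``Borel probability measure'' and its basic properties in $\mathsf{ZF}+\mathsf{DC}$. Without countable choice, Borel sets and $\sigma$-additivity can misbehave. Under $\mathsf{DC}$ (indeed $\mathsf{AC}_\omega$) the standard theory is fine. Also, $\mu$ is determined by its values on basic clopen sets, and the argument above uses only those values. So the only place we would note the choice assumption is in identifying $\mu^*$ of $A$ with the infimum over countable clopen covers. That identification follows from outer regularity of $\mu$, which holds under countable choice.
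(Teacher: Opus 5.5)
Your proof is correct and follows the paper's argument essentially step for step: atomlessness plus K\"onig's lemma on the tree of strings with $\mu([s])\ge\delta$ gives $\max_{s\in2^k}\mu([s])\to0$, you then choose lengths with this maximum below $\varepsilon2^{-n-1}$, and you sum over the strong-measure-zero cover. Your closing appeal to outer regularity is unnecessary, since $\bigcup_n[s_n]$ is itself a Borel (open) superset of $A$, so $\mu^*(A)\le\mu(\bigcup_n[s_n])$ follows directly from the definition of outer measure.
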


\begin{proof}
Let $A$ have strong measure zero, let $\mu$ be atomless, and fix
$\varepsilon>0$.  Atomlessness and compactness give
\[
 \lim_{k\to\infty}\max\{\mu([s]):s\in2^k\}=0.
\]
Otherwise, for some $\delta>0$ the finitely branching tree of strings $s$
with $\mu([s])\geq\delta$ would have a branch $x$, and continuity from above
would give $\mu(\{x\})\geq\delta$.

Choose $k_n$ so that
\[
 \max\{\mu([s]):s\in2^{k_n}\}<\varepsilon2^{-n-1}.
\]
A strong measure zero cover $A\subseteq\bigcup_n[s_n]$, with
$s_n\in2^{k_n}$, then gives
\[
 \mu^*(A)\leq\sum_n\mu([s_n])<\varepsilon.
\]
\end{proof}

\begin{lemma}\label{lem:un-perfect}
A universally null set contains no perfect subset.
\end{lemma}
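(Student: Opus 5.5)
The idea is to exhibit, for a perfect set $P\subseteq2^\omega$, an atomless Borel probability measure $\mu$ on $2^\omega$ that concentrates on $P$. Then $\mu^*(A)\geq\mu^*(P)=1$ for any $A\supseteq P$, so $A$ cannot be universally null.

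The construction will be choice-free. Let $P\subseteq A$ be perfect. Fix the canonical tree $T\subseteq2^{<\omega}$ with $[T]=P$. Let $h:2^\omega\to P$ be the homeomorphism obtained by mapping the $n$th binary splitting level of $2^{<\omega}$ to the $n$th splitting level of $T$. Both $T$ and $h$ are defined recursively from $T$ alone, with no choices. Let $\lambda$ be the standard coin-flipping measure on $2^\omega$, and set $\mu=h_*\lambda$, that is, $\mu(B)=\lambda(h^{-1}[B])$ for Borel $B$. Since $h$ is continuous, preimages of Borel sets are Borel, so $\mu$ is a Borel probability measure. It is atomless: $h$ is injective, so $\mu(\{x\})\leq\lambda(\{h^{-1}(x)\})=0$.

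To compute the outer measure, note that any Borel $B\supseteq A$ contains $P$. Hence $h^{-1}[B]=2^\omega$, and $\mu(B)=1$. Outer measure is an infimum over Borel supersets (or over countable unions of basic open sets, which are Borel), so $\mu^*(A)=1\neq0$, and $A$ is not universally null.

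The only step needing care in $\mathsf{ZF}$ is that $\mu$ is a genuine countably additive Borel measure. This follows because $\lambda$ is countably additive on Borel sets in $\mathsf{ZF}$ (indeed $\mathsf{DC}$ is available in the paper's setting), and pushforward along a Borel map preserves countable additivity. Alternatively, one can avoid the general Borel $\sigma$-algebra altogether. Define $\mu$ on basic clopen sets by $\mu([s])=2^{-k}$ when $s$ extends the $k$th splitting node of $T$ but not the $(k+1)$st, and $\mu([s])=0$ when $s\notin T$. Then compute outer measure via covers by basic clopen sets. Any such cover of $P$ has a finite subcover by compactness, and the additivity of the clopen values shows its total mass is at least $1$. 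I expect this compactness computation to be the cleanest route and the only mildly delicate point.
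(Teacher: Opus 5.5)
Your argument is correct and is the paper's proof: push the coin-flipping measure forward along a homeomorphism $2^\omega\to P$, get atomlessness from injectivity, and note that every Borel superset of $A$ pulls back to all of $2^\omega$. One correction to your side remark: countable additivity of the coin-flipping measure on Borel sets is not a $\mathsf{ZF}$ theorem (it fails when $2^\omega$ is a countable union of countable sets, as in the Feferman--L\'evy model), so that step rests on the countable choice supplied by $\mathsf{DC}$; likewise, your clopen-cover variant avoids this only if universally null sets are defined via measures given on basic clopen sets rather than genuine Borel measures.
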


\begin{proof}
If $P\subseteq A$ is perfect, push the usual product measure on $2^\omega$
forward through a homeomorphism $2^\omega\to P$.  This gives an atomless
Borel probability measure on $2^\omega$ concentrated on $P$, for which
$\mu^*(A)=1$.
\end{proof}

\begin{theorem}[Equivalence of smallness properties]\label{thm:smallness}
Assume $\mathsf{ZF}+\mathsf{DC}+\TP+\BCTWO$.  For every
$A\subseteq2^\omega$, the following are equivalent:
\begin{enumerate}
\item $A$ is well-orderable;
\item $A$ is Rothberger;
\item $A^n$ is Rothberger for every $1\leq n<\omega$;
\item $A\in\SN$;
\item $A\in\UN$;
\item $A$ contains no perfect subset;
\item $A\in\Mar$.
\end{enumerate}
\end{theorem}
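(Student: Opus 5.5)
I will prove the equivalences as a cycle, using the lemmas just established together with $\TP$ to close the loop, and then treat the Marczewski ideal separately.

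The implication (1)$\Rightarrow$(3) is \Cref{prop:rothberger}: every finite power of a well-orderable subspace is Rothberger. (3)$\Rightarrow$(2) is the case $n=1$. (2)$\Rightarrow$(4) is \Cref{lem:roth-sn}, (4)$\Rightarrow$(5) is \Cref{lem:sn-un}, which uses $\mathsf{DC}$, and (5)$\Rightarrow$(6) is \Cref{lem:un-perfect}. Finally, (6)$\Rightarrow$(1) is exactly $\TP$: a subset of $2^\omega$ with no perfect subset must be well-orderable. This settles (1)--(6).

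For (7), I will show (7)$\Rightarrow$(6) and (1)$\Rightarrow$(7).

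For (7)$\Rightarrow$(6), suppose $A\in\Mar$ and $P\subseteq A$ is perfect. Applying the definition of $\Mar$ to $P$ gives a perfect $Q\subseteq P$ disjoint from $A$. This is impossible since $Q\subseteq P\subseteq A$ and $Q$ is nonempty.

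For (1)$\Rightarrow$(7), let $A$ be well-orderable and let $P\subseteq2^\omega$ be perfect. Fix a homeomorphism $h:2^\omega\to P$, obtained choicelessly from the canonical tree of $P$. Then $h^{-1}[A\cap P]$ is a well-orderable subset of $2^\omega$. By \Cref{lem:continuum-not-wo} it is not all of $2^\omega$, but I need a whole perfect set avoiding it, not just one point. So I work inside $2^\omega$ and use $\BCTWO$ on the perfect Polish space $\mathcal K$ of perfect subsets of $2^\omega$. Concretely, I take the space of perfect trees coded in $2^{2^{<\omega}}$; it is a $G_\delta$ subset, hence Polish, and it has no isolated points. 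For each $a\in h^{-1}[A\cap P]$, let
\[
D_a=\{Q\in\mathcal K: a\notin Q\}.
\]
This set is open, since membership of a point in a closed set $Q$ is a closed condition in the Vietoris (equivalently, tree) topology. It is dense, since any basic neighbourhood of a perfect tree, given by a finite level of the tree, contains a perfect tree avoiding the fixed point $a$: below each of the finitely many nodes, move to a perfect subtree missing $a$. The family $\{D_a\}$ is well-orderable, so $\BCTWO$ yields $Q\in\bigcap_a D_a$. Then $h[Q]\subseteq P$ is perfect and disjoint from $A$, so $A\in\Mar$.

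The main obstacle is this last step: checking that the hyperspace of perfect sets is a perfect Polish space in $\mathsf{ZF}$ with the $D_a$ dense open. If that coding proves awkward, I would fall back on an alternative with no hyperspace. Use the Cohen-genericity argument from \Cref{thm:model-principles} to obtain a perfect set of mutually generic points. More abstractly, the same conclusion should follow by applying $\RSWO$ (\Cref{thm:bct-rs}) to the countable poset of finite approximations to perfect subtrees of $2^{<\omega}$, ordered by end extension of levels. For this poset, the set $E_a$ of conditions whose current finite level already avoids $a$ is dense for each fixed $a$, and a filter meeting all $E_a$ together with the countably many length requirements determines a perfect tree missing $h^{-1}[A]$.
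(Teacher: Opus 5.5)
Your chain (1)$\Rightarrow$(3)$\Rightarrow$(2)$\Rightarrow$(4)$\Rightarrow$(5)$\Rightarrow$(6)$\Rightarrow$(1) and your proof of (7)$\Rightarrow$(6) match the paper. For (1)$\Rightarrow$(7) you take a genuinely different, and correct, route.

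The paper's argument takes two lines. $A\cap P$ is well-orderable, and $P$ is not, by \Cref{lem:continuum-not-wo}. A union of two well-orderable sets is well-orderable, so $P\smallsetminus A$ cannot be well-orderable. Then $\TP$ immediately gives a perfect $Q\subseteq P\smallsetminus A$. Your worry that you ``need a whole perfect set avoiding it, not just one point'' is exactly what $\TP$ resolves once you notice that non-well-orderability passes to $P\smallsetminus A$.

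Your alternative applies $\BCTWO$ to the sets $D_a=\{T:a\notin[T]\}$ in the space of perfect trees, and it works as you set it up:
\begin{itemize}
\item the perfect pruned trees form an explicit $G_\delta$ subset of $2^{2^{<\omega}}$ with no isolated points;
\item each $D_a$ is open, because $a\in[T]$ iff $a\restr n\in T$ for all $n$;
\item each $D_a$ is dense: for each node $s\in T$ of height $n$ with $s\subseteq a$, replace the part of $T$ above $s$ by the nodes up to a splitting node $t\supseteq s$ together with the cone above $t^\frown i$, where $t^\frown i\not\subseteq a$. This keeps $T\cap2^{\leq n}$ fixed and the tree perfect.
\end{itemize}

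What your route buys is that it proves $\WO\subseteq\Mar$ from $\BCTWO$ alone, without $\TP$. What the paper's route buys is brevity, since $\TP$ is assumed anyway.

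Two remarks on your fallbacks, neither of which you need. The $\RSWO$ version would also need the countably many splitting requirements, not just length requirements, to guarantee that the generic tree is perfect. The Cohen-genericity fallback is specific to $M$ and is not available under the abstract hypotheses of the theorem.
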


\begin{proof}
By \Cref{prop:rothberger}, (1) implies (2) and (3), and (3) implies (2).
By \Cref{lem:roth-sn,lem:sn-un,lem:un-perfect},
\[
 (2)\Longrightarrow(4)\Longrightarrow(5)\Longrightarrow(6).
\]
The statement $\TP$ gives (6)$\Rightarrow$(1).

If $A\in\Mar$, then $A$ contains no perfect set, so (7)$\Rightarrow$(6).
Conversely, suppose that $A$ is well-orderable and let $P$ be perfect.  Then
$A\cap P$ is well-orderable.  If $P\smallsetminus A$ were also well-orderable,
then $P$ would be well-orderable, contrary to \Cref{lem:continuum-not-wo}.
Thus $P\smallsetminus A$ is not well-orderable, and $\TP$ gives a perfect
$Q\subseteq P\smallsetminus A$.  Hence $A\in\Mar$.
\end{proof}

\begin{corollary}\label{cor:smallness-model}
In Truss's model, the well-orderable subsets of $2^\omega$ are exactly the
Rothberger sets, the sets all of whose finite powers are Rothberger, the
strong measure zero sets, the universally null sets, the Marczewski null
sets, and the sets containing no perfect subset.
\end{corollary}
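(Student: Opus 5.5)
This follows by specializing \Cref{thm:smallness} to $M$, so the plan is only to check its hypotheses. By \Cref{thm:model-principles}, $M$ satisfies $\mathsf{ZF}+\mathsf{DC}+\mathsf{AC}_{\WO}+\TP+\BCTWO$, and in particular every hypothesis of \Cref{thm:smallness}. Working inside $M$ and fixing an arbitrary $A\subseteq2^\omega$ in $M$, \Cref{thm:smallness} gives the equivalence of its conditions (1)--(7) for $A$. Read as an identification of families of subsets of $2^\omega$, this equivalence is exactly the statement of the corollary.

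The only point needing care is that every notion in the list is interpreted in $M$, not in $L[G]$. This concerns being Rothberger, having Rothberger finite powers, being strong measure zero, being universally null, being Marczewski null, and containing no perfect subset. \Cref{thm:smallness} is a $\mathsf{ZF}+\mathsf{DC}$ theorem applied inside $M$, so this interpretation is automatic. No absoluteness between $M$ and $L[G]$ is needed, although \Cref{lem:same-reals} would supply it for the real-coded notions.

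I expect no real obstacle here. The substantive work is already done in \Cref{thm:model-principles}, which verifies $\BCTWO$ via a Cohen-generic point over a bounded intermediate extension, and in \Cref{thm:smallness}.
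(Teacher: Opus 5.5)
Your proposal is correct and is the paper's route. The paper gives no separate proof: the corollary is \Cref{thm:smallness} applied inside $M$, and \Cref{thm:model-principles} supplies its hypotheses $\mathsf{ZF}+\mathsf{DC}+\TP+\BCTWO$, with every notion interpreted in $M$, as you note.
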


For comparison, Scheepers and Tall proved that after adding uncountably many Cohen reals, every ground-model Lindel\"of space becomes Rothberger \cite{ScheepersTall2010}.
Here one obtains, inside $M$, the sharper characterization that a subset of $2^\omega$ is Rothberger exactly when it is well-orderable.
Cardona's result that one Cohen real makes the ground-model reals strong measure zero \cite{Cardona2020} likewise gives a direct forcing proof of the implication from well-orderability to strong measure zero.

\section{The ideal of well-orderable sets}\label{sec:ideal}

In this section we work in $M$ and let
\[
 \mathcal I=\{A\subseteq2^\omega:A\text{ is well-orderable}\}.
\]
By \Cref{cor:smallness-model}, $\mathcal I=\SN=\UN=\Mar$.

\begin{proposition}[Well-ordered unions]\label{prop:wellordered-unions}
If $\langle A_\xi:\xi<\lambda\rangle$ is an ordinal-indexed sequence of
members of $\mathcal I$, then
\[
 \bigcup_{\xi<\lambda}A_\xi\in\mathcal I.
\]
\end{proposition}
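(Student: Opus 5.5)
Since each $A_\xi$ is well-orderable, I will use $\mathsf{AC}_{\WO}$ (from \Cref{thm:truss}(1)) to choose, for each $\xi<\lambda$, a well-ordering $\prec_\xi$ of $A_\xi$. The family $\{\text{well-orderings of }A_\xi\}_{\xi<\lambda}$ is indexed by the ordinal $\lambda$ and every member is nonempty, so a choice function exists in $M$. With the sequence $\langle\prec_\xi:\xi<\lambda\rangle$ fixed, I would well-order the union $\bigcup_{\xi<\lambda}A_\xi$ in the usual way. For $x$ in the union, let $\xi(x)$ be the least $\xi$ with $x\in A_\xi$. Then order by $x<y$ iff $\xi(x)<\xi(y)$, or $\xi(x)=\xi(y)$ and $x\prec_{\xi(x)}y$. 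This is a well-ordering, since it is a lexicographic order on a subset of $\lambda\times$ (disjoint pieces, each well-ordered). Hence the union is a well-orderable subset of $2^\omega$, i.e.\ it belongs to $\mathcal I$.

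An alternative route avoids choosing well-orderings altogether and uses \Cref{thm:truss}(2). For each $\xi$, let $\alpha_\xi<\omega_1$ be least with $A_\xi\subseteq\R^{L[G_{<\alpha_\xi}]}$. This is definable, so no choice is needed. If $\sup_\xi\alpha_\xi<\omega_1$, the union lies in a single $\R^{L[G_{<\alpha}]}$ and is well-orderable by \Cref{thm:truss}(2). The difficulty with this route is that $\lambda$ may be large (e.g.\ $\lambda\geq\omega_1$), so the supremum could be $\omega_1$, and it is not clear that $\alpha_\xi$ stays bounded. For this reason I would rely on the first argument, which is where the only real content lies: the appeal to $\mathsf{AC}_{\WO}$ to pick the well-orderings uniformly.

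The main obstacle is therefore only the legitimacy of choosing the well-orderings simultaneously. $\mathsf{AC}_{\WO}$ handles this directly, because the index set $\lambda$ is an ordinal.

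As a corollary, the bounded-supremum worry dissolves. Once the union is known to be well-orderable, \Cref{thm:truss}(2) forces it into some $\R^{L[G_{<\alpha}]}$, so the $\alpha_\xi$ are in fact bounded.
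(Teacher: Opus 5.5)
Your proof is correct and is essentially the paper's argument: use $\mathsf{AC}_{\WO}$ to choose a well-order of each $A_\xi$, then order the union first by least index of occurrence and then by the chosen well-order at that index. You also correctly set aside the route via bounding the $\alpha_\xi$, which cannot be carried out directly and whose conclusion only follows afterwards from \Cref{thm:truss}(2).
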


\begin{proof}
For each $\xi<\lambda$, the set of well-orders of $A_\xi$ is nonempty.  By
$\mathsf{AC}_{\WO}$, choose one for every $\xi$.  Well-order the union by
first taking the least index at which a point occurs and then using the
chosen well-order at that index.
\end{proof}

\begin{corollary}\label{cor:ideal-invariants}
There is no well-orderable family of members of $\mathcal I$ whose union is
$\mathcal I$-positive or covers $2^\omega$.  There is no well-orderable
$\mathcal I$-positive set and no well-orderable cofinal family in
$(\mathcal I,\subseteq)$.
\end{corollary}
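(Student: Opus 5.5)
The corollary has four assertions, and each follows from \Cref{prop:wellordered-unions} together with \Cref{cor:reals-not-wo}. The only work is to check that the well-orderable index set can be replaced by an ordinal-indexed sequence.

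\textbf{Unions.} Let $\mathcal F\subseteq\mathcal I$ be well-orderable. Fix a well-order and transfer it along the order isomorphism to obtain an enumeration $\langle A_\xi:\xi<\lambda\rangle$ of $\mathcal F$. No choice is needed here, since the bijection with an ordinal is given by the chosen well-order. \Cref{prop:wellordered-unions} then puts $\bigcup\mathcal F$ in $\mathcal I$. So the union is never $\mathcal I$-positive. It also cannot cover $2^\omega$: otherwise $2^\omega\in\mathcal I$, which contradicts \Cref{cor:reals-not-wo}. (Alternatively, $2^\omega$ is perfect, so $2^\omega\notin\Mar=\mathcal I$.)

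\textbf{No well-orderable positive set.} This is immediate from the definition of $\mathcal I$ for subsets of $2^\omega$: a well-orderable $A\subseteq2^\omega$ lies in $\mathcal I$.

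\textbf{No well-orderable cofinal family.} Suppose $\mathcal C\subseteq\mathcal I$ were well-orderable and cofinal under $\subseteq$. Every singleton $\{x\}$ with $x\in2^\omega$ lies in $\mathcal I$, so it is contained in some member of $\mathcal C$. Hence $\bigcup\mathcal C=2^\omega$, which contradicts the first part.

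There is no real obstacle. The only point needing care is the reindexing of a well-orderable family by an ordinal, which is done explicitly from the given well-order. The choice of well-orders for the individual members has already been handled inside \Cref{prop:wellordered-unions} via $\mathsf{AC}_{\WO}$.
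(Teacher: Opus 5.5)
Your proposal is correct and takes the same route as the paper. You enumerate the well-orderable family by an ordinal and apply \Cref{prop:wellordered-unions}; you exclude a well-orderable positive set by definition; and you derive the cofinal-family case from the covering case via singletons. Your explicit appeal to \Cref{cor:reals-not-wo} to see that $2^\omega\notin\mathcal I$ spells out a step the paper leaves implicit.
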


\begin{proof}
A well-orderable family can be enumerated by an ordinal, so its union belongs
to $\mathcal I$ by \Cref{prop:wellordered-unions}.  A well-orderable positive
set is excluded by definition.  A cofinal family would cover every singleton
and hence all of $2^\omega$, contradicting the first assertion.
\end{proof}

Let $\cN$ and $\cM$ be the Lebesgue null and meager ideals, respectively.

\begin{proposition}\label{prop:ideal-comparisons}
In $M$:
\begin{enumerate}
\item $\mathcal I\subsetneq\cN$;
\item $\mathcal I$ and $\cM$ are incomparable.
\end{enumerate}
\end{proposition}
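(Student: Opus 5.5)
For (1), the inclusion $\mathcal I\subseteq\cN$ follows at once from \Cref{cor:smallness-model}: a well-orderable set is universally null, so in particular it is null for Lebesgue (product) measure on $2^\omega$. Alternatively, it follows directly from \Cref{thm:truss}(2): a well-orderable $A$ lies in $\R^{L[G_{<\alpha}]}$, and the Cohen real $c_\alpha$ makes that set null via the meager/null decomposition recalled in the introduction. Strictness requires a null set that is not well-orderable. I would take a perfect null set, say
\[
 P=\{x\in2^\omega:x(2n)=0\text{ for all }n\}.
\]
It is closed, has measure zero, and is homeomorphic to $2^\omega$, so it is not well-orderable by \Cref{lem:continuum-not-wo} (or by \Cref{cor:reals-not-wo}). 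Hence $P\in\cN\smallsetminus\mathcal I$.

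For (2), I need witnesses in both directions. First, $\cM\not\subseteq\mathcal I$: the same argument with a perfect nowhere dense set works, since the set $P$ above is closed nowhere dense, hence meager but not well-orderable. Second, $\mathcal I\not\subseteq\cM$: I would use $2^\omega\cap L$, which is well-orderable in $M$ because it lies in $L=L[G_{<0}]$ and $L$ has a definable well-order. It remains to see that it is not meager in $M$. Suppose $2^\omega\cap L\subseteq\bigcup_n F_n$ with each $F_n$ closed nowhere dense in $M$. Countable choice from \Cref{prop:dc} lets me choose the sequence, and then code it by a single real. By \Cref{lem:same-reals} that real lies in some $L[G_{<\alpha}]$. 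The inclusion ``every constructible real lies in $\bigcup_nF_n$'' then holds inside $L[G_{<\alpha}]$, where the codes are evaluated absolutely, as noted before \Cref{thm:model-principles}. So $L[G_{<\alpha}]$ would think the ground-model reals are meager. But $L[G_{<\alpha}]$ is an extension of $L$ by a countable forcing, which is forcing-equivalent to a single Cohen real. After adding a Cohen real the ground-model reals are not meager, since Cohen forcing adds no eventually different real; this is the standard fact that $\mathrm{non}(\cM)$ is not increased.

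The main obstacle is justifying that last fact cleanly. I would argue directly by the method of \Cref{lem:no-dominating-real}. Given a name for a code $\langle F_n\rangle$ of closed nowhere dense sets and a condition $p$, enumerate the pairs $(q,n)$ with $q\leq p$. In $L$, build a real $x$ by finite approximations: at stage $i$, extend $q_i$ and the current finite initial segment $s$ of $x$ so that some $r_i\leq q_i$ forces $[s']\cap\dot F_{n_i}=\varnothing$ for the extended segment $s'$. This is possible because $r_i$ forces $\dot F_{n_i}$ nowhere dense, and we can decide a basic open subset of $[s]$ missing it. The resulting $x\in L$ satisfies $p\Vdash\check x\notin\bigcup_n\dot F_n$, because any condition below $p$ forcing $x\in\dot F_n$ would be some $q_i$ with $n_i=n$, and is contradicted by $r_i$. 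Hence $2^\omega\cap L$ is nonmeager in $M$, completing incomparability.
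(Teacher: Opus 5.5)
Your proposal is correct and follows the paper's proof: the same perfect null nowhere dense set witnesses both $\mathcal I\subsetneq\cN$ and $\cM\not\subseteq\mathcal I$, and $2^\omega\cap L$ witnesses $\mathcal I\not\subseteq\cM$ by coding a putative meager cover into some $L[G_{<\alpha}]$. The only difference is that you prove directly, in the style of \Cref{lem:no-dominating-real}, that the ground-model reals stay nonmeager, where the paper cites Kunen's theorem; your argument uses only that the forcing is countable in $L$, so it covers $\alpha=0$ and $\alpha>0$ uniformly without reducing to a single Cohen real.
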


\begin{proof}
Since $\mathcal I=\UN$, every member of $\mathcal I$ is Lebesgue null.  The
perfect set
\[
 P_0=\{x\in2^\omega:\forall n\;x(2n)=0\}
\]
is null and nowhere dense, but does not belong to $\mathcal I$.  This proves
$\mathcal I\subsetneq\cN$ and $\cM\nsubseteq\mathcal I$.

For the other non-inclusion, let $C=2^\omega\cap L$.  The canonical well-order
of $L$ well-orders $C$, so $C\in\mathcal I$.  Suppose that $C$ were meager in
$M$.  A sequence of closed nowhere dense sets covering $C$ is coded by one
real, hence its code belongs to some $L[G_{<\alpha}]$.  Membership of a
constructible real in a coded closed set, and the assertion that a closed code
is nowhere dense, are absolute between this intermediate extension and $M$.
Thus $L[G_{<\alpha}]$ would see the constructible reals as meager.

If $\alpha=0$, this contradicts the Baire category theorem in $L$.  If
$\alpha>0$, then $\alpha$ is countable in $L$, so
$L[G_{<\alpha}]$ is forcing-equivalent over $L$ to an extension by one Cohen
real.  Kunen's theorem says that the ground-model reals remain nonmeager after
adding a Cohen real; see \cite[Theorem 1.4]{Cardona2020}.  This is again a
contradiction.  Hence $C\notin\cM$.
\end{proof}
\begin{corollary}\label{cor:constructible-nonbp}
The set $C=2^\omega\cap L$ is nonmeager in $M$ and does not have the Baire property.
\end{corollary}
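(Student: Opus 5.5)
Nonmeagerness of $C$ in $M$ is exactly the second half of \Cref{prop:ideal-comparisons}(2), so only the failure of the Baire property needs an argument. I will argue by contradiction: suppose $C$ has the Baire property in $M$. Then there is an open set $U\subseteq2^\omega$ with $C\mathbin{\triangle}U$ meager. Because $C$ is nonmeager, $U$ is nonempty, so it contains a basic clopen set $[s]$. It follows that $[s]\smallsetminus C$ is meager. The same then holds for $[s]\cap C$'s complement inside $[s]$, so $C$ is comeager in $[s]$.

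Next I will use the homogeneity of $C$. For each $t\in2^{<\omega}$ with $|t|=|s|$, the homeomorphism $\sigma_t:[s]\to[t]$ that replaces the initial segment $s$ by $t$ is constructible, and it maps constructible reals to constructible reals. Hence $\sigma_t[C\cap[s]]=C\cap[t]$. Since homeomorphisms preserve meagerness, $[t]\smallsetminus C$ is meager for every such $t$. These finitely many clopen pieces cover $2^\omega$, so $2^\omega\smallsetminus C$ is meager. Finite unions of meager sets are meager in $\mathsf{ZF}$, which I take as routine.

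To finish, I need that $C$ itself is meager, so that $2^\omega=C\cup(2^\omega\smallsetminus C)$ is meager, contradicting the Baire category theorem in $M$, which holds there under $\mathsf{DC}$ or simply by $\BCTWO$. The cleanest route is to show directly that $C$ is meager in $M$. That is false, since $C$ is nonmeager, so instead I will use $\mathcal I$. By \Cref{thm:smallness} and \Cref{prop:ideal-comparisons}(1), $C\in\mathcal I\subseteq\cN$, so $C$ is Lebesgue null. The complement $2^\omega\smallsetminus C$ would be meager, so $2^\omega$ would be the union of a null set and a meager set. That alone is no contradiction, because of the classical meager/null partition. So I will use a different argument for the final step: the meager set $2^\omega\smallsetminus C$ is covered by countably many closed nowhere dense sets coded by a single real in some $L[G_{<\alpha}]$. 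The fresh Cohen real $c_\alpha$, read as an element of $2^\omega$, is Cohen-generic over $L[G_{<\alpha}]$. It therefore avoids every closed nowhere dense set coded there, by the argument of \Cref{thm:model-principles}. So $c_\alpha\in C$, which means $c_\alpha\in L$, and that is absurd.

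The main obstacle is this final step. A naive null-plus-meager argument does not give the contradiction, and the proof must instead go through the Cohen-genericity and coding absoluteness already used in \Cref{thm:model-principles}. The symmetry step needs only the observation that the maps $\sigma_t$ are constructible and preserve $C$.
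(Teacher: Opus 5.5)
Your argument is correct, but it takes a different route from the paper. The paper argues in $L[G]$ rather than in $M$. It cites Jech (Lemma 26.50 and Example 26.52) for the fact that after adding $\omega_1$ Cohen reals to $L$ the constructible reals form a nonmeager set without the Baire property. It then transfers this to $M$ via \Cref{lem:same-reals}, since meager and open sets are witnessed by real codes.

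You instead take nonmeagerness from the proof of \Cref{prop:ideal-comparisons} (Kunen's theorem) and refute the Baire property inside $M$. A closed nowhere dense cover of $[s]\smallsetminus C$ is coded in some $L[G_{<\alpha}]$. The Cohen-generic real over $L[G_{<\alpha}]$ obtained from $c_\alpha$ avoids that cover and is nonconstructible. This is self-contained and reuses exactly the coding and genericity machinery of \Cref{thm:model-principles}. It also yields the slightly stronger fact that $U\smallsetminus C$ is nonmeager for every nonempty open $U$. The paper's version is shorter but outsources precisely this step.

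Two simplifications are available:
\begin{enumerate}
\item The homogeneity maps $\sigma_t$ are unnecessary. The proof of \Cref{thm:model-principles} already produces a Cohen-generic point inside $[s]$, which contradicts the meagerness of $[s]\smallsetminus C$ directly.
\item The exploratory paragraph invoking the Baire category theorem and the null/meager decomposition should be cut, since neither is used.
\end{enumerate}

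The final step can also be made purely abstract. Suppose $[s]\smallsetminus C\subseteq\bigcup_nF_n$ with each $F_n$ closed nowhere dense. Apply $\BCTWO$ to the well-orderable family of the sets $2^\omega\smallsetminus F_n$ together with the sets $2^\omega\smallsetminus\{c\}$ for $c\in C$. This gives a point of $[s]$ outside $C\cup\bigcup_nF_n$, a contradiction. So under $\BCTWO$ alone, a well-orderable set of reals has the Baire property only if it is meager.
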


\begin{proof}
Jech \cite[Lemma 26.50 and Example 26.52]{Jech2003} shows that after adding $\omega_1$ Cohen reals over $L$, the set of constructible reals is nonmeager and does not have the Baire property.
By \Cref{lem:same-reals}, $M$ and $L[G]$ have the same reals and the same coded open sets, so the same holds in $M$.
\end{proof}

Thus a well-orderable union of meager sets need not be meager: $C$ is the
well-orderable union of its singletons, although
\Cref{cor:indexed-meager} says that its complement is dense.

We next compare the Hurewicz property.  A space $X$ is \emph{Hurewicz} if,
for every sequence $\langle\mathcal U_n:n<\omega\rangle$ of open covers,
there are finite $\mathcal V_n\subseteq\mathcal U_n$ such that every point of
$X$ belongs to $\bigcup\mathcal V_n$ for all but finitely many $n$.

\begin{proposition}\label{prop:not-hurewicz}
There is a well-orderable set $A\subseteq2^\omega$ such that every finite power of $A$ is Rothberger, but $A$ is not Hurewicz.
\end{proposition}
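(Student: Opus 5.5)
Every well-orderable set already has all finite powers Rothberger by \Cref{prop:rothberger}, so the task is to find a well-orderable $A\subseteq2^\omega$ that fails the Hurewicz property. I will take $A=2^\omega\cap L$, viewed as a subset of $2^\omega$. It is well-orderable via the canonical well-order of $L$, as in the proof of \Cref{prop:ideal-comparisons}.

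To show that $A$ is not Hurewicz, I will use the standard link between the Hurewicz property and eventual boundedness of continuous images in $\omega^\omega$. I will work in $\omega^\omega$ first and then transfer to $2^\omega$. Let $B=\omega^\omega\cap L$. The space $\omega^\omega$ is homeomorphic to the co-countable subset $Z$ of $2^\omega$ consisting of sequences with infinitely many $1$'s, via a homeomorphism $h$ that is absolute and lies in $L$. Then $h[B]=Z\cap L$, and $A=h[B]\cup(\text{eventually zero sequences})$, where the eventually zero part is countable.

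The core step is to show that $B$ is not Hurewicz, directly from the definition. For each $n$, let $\mathcal U_n=\{U_{n,m}:m<\omega\}$ with $U_{n,m}=\{x:x(n)\leq m\}$. This is an increasing open cover of $\omega^\omega$. Suppose finite $\mathcal V_n\subseteq\mathcal U_n$ were chosen in $M$. Since the covers are increasing, each $\bigcup\mathcal V_n$ is contained in a single set $U_{n,g(n)}$, where $g(n)$ is the largest index used; this $g$ is defined outright, so no choice is needed. The Hurewicz conclusion would then say $x\leq^*g$ for all $x\in B$, that is, $g$ dominates $\omega^\omega\cap L$. But $g\in\omega^\omega\cap M$, so by \Cref{lem:same-reals} and the bounding argument $g\in L[G_{<\alpha}]$ for some $\alpha<\omega_1$. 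That contradicts \Cref{lem:no-dominating-real}.

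To transfer this to $A$, I will show directly that a countable union of a Hurewicz set with countably many points cannot fail in this way. More simply, I will pull the covers back. Given the covers $\mathcal U_n$ of $B$, define open covers of $A$ by $\mathcal W_n=\{h[U_{n,m}]\cup O_{n,m}:m<\omega\}$. Here $h[U_{n,m}]$ is open in $Z$, hence of the form $Z\cap W$ with $W$ open in $2^\omega$ (take the canonical union of basic sets), and $O_{n,m}$ is a basic open neighbourhood covering the $m$-th eventually zero sequence $e_m$, chosen so that the family still covers. Any finite subfamilies witnessing Hurewicz for $A$, restricted to $h[B]$, yield finite subfamilies for $B$, which is impossible. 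A cleaner route is to take $\mathcal W_n$ to be the increasing sets $W_{n,m}=\{y\in2^\omega:y\in Z\Rightarrow h^{-1}(y)(n)\leq m\}\cup\{\text{open nbhd of }e_0,\dots,e_m\}$, which keeps the monotonicity argument intact. I expect this transfer to be the main obstacle, since the basic open sets must be handled so that the sets are open, cover $A$, and increase in $m$. The fallback is to note that the Hurewicz property is preserved by continuous images, and that $B$ is a closed-in-$A\cap Z$ continuous image, in order to avoid the explicit bookkeeping.
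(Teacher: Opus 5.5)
Your core step matches the paper's argument: the increasing covers $U_{n,m}=\{x:x(n)\le m\}$ of $B=\omega^\omega\cap L$ would yield a $g\in M$ dominating $\omega^\omega\cap L$, contradicting \Cref{lem:no-dominating-real}. The gap is the passage from $B$ to your chosen $A=2^\omega\cap L=h[B]\cup E$, where $E$ is the countable dense set of eventually-zero sequences. Non-Hurewiczness does not transfer from a co-countable subspace to the whole space. For example, $2^\omega=Z\cup E$ is compact, hence Hurewicz, although $Z\cong\omega^\omega$ is not.

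Each of your concrete routes runs into exactly this obstruction. Every open set containing an eventually-zero sequence with at most $n$ ones contains points $h(x)$ with $x\in B$ and $x(n)$ arbitrarily large. The zero sequence is such a point, and it must be covered at almost every stage. So the finite subfamily of $\mathcal W_n$ contains a member around it whose trace on $h[B]$ lies in no $h[U_{n,k}]$. That member gives no bound $x(n)\le g(n)$. For the same reason, the set $\{y:y\in Z\Rightarrow h^{-1}(y)(n)\le m\}$ is not open. The fallback also fails: $h[B]=A\cap Z$ is dense in $A$, not closed, and you only have a continuous map from $A\cap Z$ onto $B$, not from $A$.

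The simplest repair is to take $A=h[B]=Z\cap L$ itself. It is a subset of $L$, hence well-orderable, and it is homeomorphic to $B$, so it is not Hurewicz; this is exactly the paper's proof with $e=h$.

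If you want to keep $A=2^\omega\cap L$, which is indeed not Hurewicz in $M$, you need an argument specific to that set. For instance, fix a countable dense $D=\{d_i:i<\omega\}\subseteq2^\omega$ disjoint from $L$, such as the reals almost equal to a fixed nonconstructible real. Use the increasing clopen covers $C_{n,m}=\{x:\forall i\le n\;x\restr m\neq d_i\restr m\}$. A Hurewicz bound $g$ would put $2^\omega\cap L$ inside $\bigcup_k\bigcap_{n\ge k}C_{n,g(n)}$. That is a countable union of closed sets missing $D$, hence meager, contradicting \Cref{prop:ideal-comparisons}.
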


\begin{proof}
Let $H=\omega^\omega\cap L$, and fix a homeomorphic embedding $e:\omega^\omega\to2^\omega$ coded in $L$.
Put $A=e[H]$.
The canonical well-order of $L$ well-orders $H$, and hence $A$, so \Cref{prop:rothberger} implies that every finite power of $A$ is Rothberger.

It remains to show that $H$ is not Hurewicz.
For $n,m<\omega$, let
\[
U_{n,m}=\{f\in H:f(n)\leq m\}.
\]
For each $n$, the increasing family $\{U_{n,m}:m<\omega\}$ is an open cover of $H$.
Suppose that finite subfamilies $\mathcal V_n\subseteq\{U_{n,m}:m<\omega\}$ witnessed the Hurewicz property, and define
\[
g(n)=\max\bigl(\{m:U_{n,m}\in\mathcal V_n\}\cup\{0\}\bigr).
\]
For every $f\in H$, eventual membership in $\bigcup\mathcal V_n$ implies $f\leq^*g$.
Thus $g\in M$ eventually dominates every member of $\omega^\omega\cap L$, contradicting \Cref{lem:no-dominating-real}.
Hence $H$ is not Hurewicz, and therefore neither is its homeomorphic copy $A$.
\end{proof}

\section{Marczewski measurability}
\label{sec:marczewski}

A set $A\subseteq2^\omega$ is \emph{Marczewski measurable}, or
$s$-measurable, if for every perfect $P\subseteq2^\omega$ there is a perfect
$Q\subseteq P$ such that either $Q\subseteq A$ or $Q\cap A=\varnothing$.

\begin{theorem}\label{thm:s-measurable}
Assume $\TP+\BCTWO$.  Every subset of $2^\omega$ is Marczewski measurable.
\end{theorem}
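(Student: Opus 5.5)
The plan is to reduce everything to the dichotomy $\TP$ together with \Cref{lem:continuum-not-wo}, in the same way that (1)$\Rightarrow$(7) was proved in \Cref{thm:smallness}. Fix $A\subseteq2^\omega$ and a perfect $P\subseteq2^\omega$. Split $P$ into the two pieces $P\cap A$ and $P\smallsetminus A$. Either piece contains a perfect subset, or both are perfect-set-free; we aim to rule out the latter.

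First, suppose $P\cap A$ is not well-orderable. Then $\TP$ yields a perfect $Q\subseteq P\cap A$, so $Q\subseteq A$ and we are done. Similarly, suppose $P\smallsetminus A$ is not well-orderable. Then $\TP$ yields a perfect $Q\subseteq P\smallsetminus A$, so $Q\cap A=\varnothing$ and we are again done.

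It remains to exclude the case in which both $P\cap A$ and $P\smallsetminus A$ are well-orderable. Well-order each piece. Then list $P\cap A$ first and $P\smallsetminus A$ afterwards. This concatenation is a choice-free well-ordering of their union $P$. But $P$ is a nonempty perfect subset of $2^\omega$, so \Cref{lem:continuum-not-wo} (which uses $\BCTWO$) says $P$ is not well-orderable. This is a contradiction, so one of the two earlier cases must occur.

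There is no real obstacle here. The only point needing care is that the union of two well-orderable sets is well-orderable in $\mathsf{ZF}$, with no appeal to $\mathsf{AC}_{\WO}$. This holds because only two fixed well-orders are involved, and they can be concatenated directly. Hence the theorem needs exactly $\TP+\BCTWO$ as stated.
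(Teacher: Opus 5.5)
Your proof is correct and matches the paper's argument: apply $\TP$ to whichever of $P\cap A$ and $P\smallsetminus A$ is non-well-orderable, and use \Cref{lem:continuum-not-wo} to rule out both being well-orderable. Your remark that the union of two well-orderable sets is well-orderable in $\mathsf{ZF}$ makes explicit a step the paper leaves implicit.
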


\begin{proof}
Let $A\subseteq2^\omega$ and let $P$ be perfect.  If $A\cap P$ is not
well-orderable, then $\TP$ gives a perfect subset of $A\cap P$.  Otherwise
$A\cap P$ is well-orderable.  The set $P\smallsetminus A$ cannot also be
well-orderable, by \Cref{lem:continuum-not-wo}; hence it contains a perfect
subset.
\end{proof}

The following fusion lemma gives a simultaneous form of Marczewski measurability for countably many sets.

\begin{lemma}\label{lem:simultaneous-fusion}
Assume $\mathsf{DC}$ and that every subset of $2^\omega$ is Marczewski
measurable.  If $\langle A_n:n<\omega\rangle$ is a sequence of subsets of
$2^\omega$ and $P$ is perfect, then there is a perfect $Q\subseteq P$ such
that $A_n\cap Q$ is clopen in $Q$ for every $n$.
\end{lemma}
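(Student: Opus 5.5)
I plan a standard Sacks-style fusion argument, using $\mathsf{DC}$ to make the countably many choices and Marczewski measurability to decide one set at a time on finitely many perfect pieces. The key local observation is this. If $R$ is perfect and $A$ is any set, measurability gives a perfect $R'\subseteq R$ with $R'\subseteq A$ or $R'\cap A=\varnothing$. In either case $A\cap R'$ is clopen (indeed trivial) in $R'$. The same stays true for every perfect subset of $R'$, so later shrinking never destroys a decision once made.

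The fusion runs as follows. I represent perfect sets by perfect subtrees of $2^{<\omega}$, so that a perfect set is coded by a real. The recursion will then be a $\mathsf{DC}$-recursion over a set of reals, and no choice of well-orders is needed. At stage $n$ I will have $2^n$ pairwise disjoint perfect sets $P_s\subseteq P$, for $s\in2^n$. Each $P_s$ is contained in a basic clopen set of diameter $<2^{-n}$, with $P_{s^\frown i}\subseteq P_s$, and the clopen neighbourhoods of distinct $P_s$ are disjoint. At stage $n+1$ I proceed in three steps. First, I split each $P_s$ into two disjoint perfect pieces lying in disjoint small basic clopen sets; this is possible because $P_s$ has no isolated points, and it can be done canonically via the tree. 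Second, for each of the $2^{n+1}$ pieces I apply measurability successively to $A_0,\dots,A_n$, shrinking the piece each time. Third, I take the resulting perfect sets as the $P_t$ for $t\in2^{n+1}$. The relation ``stage $n$ data extends to stage $n+1$ data'' is total on the set of such finite configurations of trees, and $\mathsf{DC}$ yields an infinite sequence of stages.

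Now let $Q=\bigcap_n\bigcup_{s\in2^n}P_s$. This set is closed and nonempty by compactness. It is perfect because each branch $x\in2^\omega$ determines a point $\bigcap_nP_{x\restr n}$ (the diameters shrink to zero), distinct branches give distinct points, and the map is a homeomorphism from $2^\omega$ onto $Q$. To see that $A_k\cap Q$ is clopen in $Q$, fix $k$ and look at stage $n=k+1$. For each $s\in2^{n}$, the set $Q\cap P_s$ equals $Q\cap V_s$, where $V_s$ is the chosen clopen neighbourhood, so it is clopen in $Q$. Moreover $P_s$ lies entirely inside $A_k$ or entirely outside it. Hence $A_k\cap Q$ is a finite union of the sets $Q\cap V_s$, and so it is clopen in $Q$.

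The main obstacle is making the recursion legitimate under $\mathsf{DC}$ alone. Measurability only asserts that some suitable $Q$ exists; it gives no choice function. I handle this by phrasing each stage as an element of the set of finite sequences of perfect trees (reals) satisfying the invariants, and applying $\mathsf{DC}$ to the total relation described above. Within one step, the finitely many existential choices are unproblematic. The second delicate point is ensuring the sets $Q\cap P_s$ are clopen in $Q$, which is why I insist on disjoint clopen neighbourhoods rather than mere disjointness.
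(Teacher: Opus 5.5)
Your proposal is correct and follows essentially the same fusion argument as the paper. At each stage you use Marczewski measurability on finitely many perfect pieces, split them into disjoint pieces of shrinking diameter, let $\mathsf{DC}$ supply the sequence of finite systems, and read off $A_k\cap Q$ as a finite union of front pieces. The differences are cosmetic: you re-decide $A_0,\dots,A_n$ after splitting, whereas the paper decides $A_n$ once before splitting; and the disjoint clopen neighbourhoods are not actually needed, since finitely many pairwise disjoint closed pieces covering $Q$ are automatically clopen in $Q$.
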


\begin{proof}
Build by fusion perfect sets $K_s$, $s\in2^{<\omega}$, beginning with
$K_\varnothing=P$.  At stage $n$, for each $s\in2^n$, use Marczewski
measurability to choose a perfect $H_s\subseteq K_s$ which is either contained
in $A_n$ or disjoint from $A_n$.  Inside $H_s$, choose disjoint perfect sets
$K_{s^\frown0}$ and $K_{s^\frown1}$ lying in incompatible basic clopen sets,
with diameter below $2^{-n}$.  There are only finitely many choices at each
stage, and $\mathsf{DC}$ supplies the sequence of finite systems.

Put
\[
 Q=\bigcap_{n<\omega}\bigcup_{s\in2^n}K_s.
\]
The splitting and diameter requirements make $Q$ perfect.  For fixed $n$,
the pieces at the $(n+1)$th front are clopen in $Q$, and all descendants of
each piece make the same decision about $A_n$.  Hence $A_n\cap Q$ is a
finite union of clopen pieces.
\end{proof}

\begin{theorem}\label{thm:continuous-restriction}
Assume $\mathsf{DC}+\TP+\BCTWO$.  Let $Y$ be a separable metric space,
let $f:2^\omega\to Y$ be arbitrary, and let $P\subseteq2^\omega$ be perfect.
There is a perfect $Q\subseteq P$ such that $f\restr Q$ is continuous.
Countably many such functions admit a common perfect set of continuity inside
$P$.
\end{theorem}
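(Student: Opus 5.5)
The plan is to reduce continuity to countably many Marczewski-measurability decisions and then apply \Cref{lem:simultaneous-fusion}. First, \Cref{thm:s-measurable} shows that every subset of $2^\omega$ is Marczewski measurable, so the hypotheses of \Cref{lem:simultaneous-fusion} are met under $\mathsf{DC}$. Fix a countable base $\langle V_k:k<\omega\rangle$ for $Y$. Separability of a metric space yields such a base without choice once a countable dense set is fixed; if one is not given, use $\mathsf{DC}$ to select one. Then set
\[
 A_k=f^{-1}[V_k]\subseteq2^\omega .
\]

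Apply \Cref{lem:simultaneous-fusion} to $\langle A_k:k<\omega\rangle$ and $P$. This gives a perfect $Q\subseteq P$ such that every $A_k\cap Q$ is clopen in $Q$. Every open $W\subseteq Y$ is a union of basic sets $V_k$, so
\[
 (f\restr Q)^{-1}[W]=\bigcup\{A_k\cap Q:V_k\subseteq W\}
\]
is open in $Q$. Hence $f\restr Q$ is continuous. This step needs no choice, since the union is indexed by a definable set of $k$.

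For countably many functions $\langle f_i:i<\omega\rangle$, form the single countable family $A_{i,k}=f_i^{-1}[V^{i}_k]$, enumerated along a bijection $\omega\times\omega\to\omega$, and apply the lemma once. If the $f_i$ map into different separable metric spaces $Y_i$, countable choice (from $\mathsf{DC}$) picks a base for each $Y_i$ simultaneously. The resulting $Q$ makes every $A_{i,k}\cap Q$ clopen, and so every $f_i\restr Q$ is continuous.

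The one point needing care is choosing the bases, since a separable metric space need not come with a fixed dense sequence. I would handle it by choosing a countable dense set via $\mathsf{DC}$ and taking rational balls around its points. The rest of the argument is routine once \Cref{lem:simultaneous-fusion} is available.
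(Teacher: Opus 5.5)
Your proposal is correct and follows the paper's proof essentially verbatim. Both apply \Cref{thm:s-measurable} and \Cref{lem:simultaneous-fusion} to the preimages $f^{-1}[V_k]$ of a countable base, and deduce continuity from these preimages being clopen on $Q$. For countably many functions, both use $\mathsf{DC}$ to choose bases for the codomains and then fuse once over the combined countable family.

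One minor point: for a single $Y$, fixing one countable dense set is a single existential instantiation and needs no choice at all. Countable choice only enters when bases must be chosen for infinitely many codomains at once.
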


\begin{proof}
Fix a countable base $\langle V_n:n<\omega\rangle$ for $Y$ and apply
\Cref{thm:s-measurable,lem:simultaneous-fusion} to the sets
$f^{-1}(V_n)$.  On the resulting perfect set $Q$, the preimage of every basic
open set is clopen, so $f\restr Q$ is continuous.  For countably many functions, use $\mathsf{DC}$ to choose countable bases for their codomains and apply the same fusion lemma to the countable collection of all corresponding
preimages.
\end{proof}

Continuous restrictions of Marczewski-measurable functions are classical; see Brown \cite{Brown1992} and the references therein.
The point here is that the argument above requires only $\mathsf{DC}$ together with Marczewski measurability of all subsets of $2^\omega$.

\begin{theorem}\label{thm:constant-embedding}
Assume $\mathsf{DC}+\TP+\BCTWO$.  Let $Y$ be a separable metric space,
let $f:2^\omega\to Y$ be arbitrary, and let $P$ be perfect.  There is a
perfect $Q\subseteq P$ such that $f\restr Q$ is either constant or a
topological embedding.
\end{theorem}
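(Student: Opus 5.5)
First apply \Cref{thm:continuous-restriction} to get a perfect $Q_0\subseteq P$ with $g=f\restr Q_0$ continuous. The remaining task is a dichotomy for continuous maps on a compact perfect set, which is essentially classical. Since $Q_0$ is compact, a continuous injection on any closed subset of $Q_0$ is automatically a topological embedding. So it suffices to find a perfect $Q\subseteq Q_0$ on which $g$ is either constant or injective.

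Case 1: some fiber $g^{-1}(\{y\})\cap Q_0$ contains a perfect set. Then that perfect set is the required $Q$, with $f\restr Q$ constant. To make this precise without choice, I would simply quantify over $y$ and take any perfect subset of $Q_0\cap g^{-1}(\{y\})$. Only the existence of some $Q$ is claimed, so no choice is needed here.

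Case 2: no fiber of $g$ on $Q_0$ contains a perfect set. I would construct, by a fusion argument using $\mathsf{DC}$ (or a direct recursion along a fixed enumeration of clopen sets), a Cantor scheme of nonempty clopen-in-$Q_0$ pieces $K_s$. These should satisfy two conditions: $K_{s^\frown0}$ and $K_{s^\frown1}$ lie inside $K_s$ with diameter below $2^{-|s|}$, and $\overline{g[K_{s^\frown0}]}\cap\overline{g[K_{s^\frown1}]}=\varnothing$. The key splitting step is the following. Given a nonempty clopen $K\subseteq Q_0$ (which is perfect), $g\restr K$ is not constant, since otherwise $K$ would be a perfect subset of a fiber. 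So there are $u,v\in K$ with $g(u)\ne g(v)$. Continuity then gives small clopen neighbourhoods $K_0\ni u$ and $K_1\ni v$ inside $K$ whose images lie in disjoint closed balls around $g(u)$ and $g(v)$ respectively. To avoid choices, I would pick the least pair of basic clopen sets in a fixed enumeration that works; these exist because the basis is countable and fixed, so the recursion is actually choice-free.

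Set $Q=\bigcap_n\bigcup_{s\in2^n}K_s$. This $Q$ is perfect by the splitting and diameter conditions. Two distinct points of $Q$ separate at some node and hence have images in disjoint sets, so $g\restr Q$ is injective. By compactness it is an embedding.

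The main obstacle is ensuring that Case 2 really yields nonconstancy on every clopen piece. This holds because every nonempty clopen subset of $Q_0$ is itself perfect. The rest is routine bookkeeping. Note that $\TP$ and $\BCTWO$ enter only through \Cref{thm:continuous-restriction}.
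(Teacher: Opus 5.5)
Your proof is correct and takes essentially the same approach as the paper. Both restrict to a perfect set of continuity via \Cref{thm:continuous-restriction}, then either find a perfect set on which $f$ is constant or run a fusion giving a perfect set on which $f$ is injective, which is an embedding by compactness. The paper splits on whether the closed relation $\{(x,y)\in Q_0^2:f(x)=f(y)\}$ has nonempty interior and cites Mycielski's theorem in the nowhere dense case. You split on whether some fiber contains a perfect set and carry out that Mycielski-type fusion by hand. Your Case 2 in fact only uses that $g$ is constant on no nonempty clopen piece, which is exactly the paper's empty-interior case.
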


\begin{proof}
By \Cref{thm:continuous-restriction}, first pass to a perfect
$P_0\subseteq P$ on which $f$ is continuous.  The relation
\[
 E=\{(x,y)\in P_0^2:f(x)=f(y)\}
\]
is closed.  If $E$ has nonempty interior, choose nonempty relatively open
$U,V\subseteq P_0$ with $U\times V\subseteq E$.  Fix $v\in V$.  Then
$f(x)=f(v)$ for every $x\in U$, so $f$ is constant on a perfect subset of
$U$.

Otherwise $E$ is nowhere dense.  The usual Mycielski fusion construction
gives a perfect $Q\subseteq P_0$ whose distinct points are $E$-inequivalent
\cite{Mycielski1964}.  Thus $f\restr Q$ is injective.  A continuous injection
from the compact space $Q$ into the Hausdorff space $Y$ is a topological
embedding.
\end{proof}

\begin{corollary}\label{cor:kernel-dichotomy}
Assume $\mathsf{DC}+\TP+\BCTWO$.  Let $E$ be an equivalence relation on
$2^\omega$ for which there is a map $f:2^\omega\to Y$ into a separable
metric space satisfying
\[
 xEy\quad\Longleftrightarrow\quad f(x)=f(y).
\]
Inside every perfect $P$ there is either a perfect subset of one $E$-class or
a perfect set of pairwise $E$-inequivalent points.  In particular, this
applies to every smooth Borel equivalence relation.
\end{corollary}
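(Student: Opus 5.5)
The plan is to apply \Cref{thm:constant-embedding} directly to the function $f$ that witnesses $E$, and then handle the smooth Borel case by citing the standard definition of smoothness.

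Let $P$ be perfect. By \Cref{thm:constant-embedding}, there is a perfect $Q\subseteq P$ on which $f\restr Q$ is either constant or a topological embedding. Consider the two cases separately.

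\emph{Constant case.} If $f\restr Q$ is constant with value $y_0$, then every $x\in Q$ satisfies $f(x)=y_0$. So all points of $Q$ are pairwise $E$-related, and $Q$ is a perfect subset of a single $E$-class.

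\emph{Embedding case.} If $f\restr Q$ is an embedding, it is in particular injective. So for distinct $x,y\in Q$ we have $f(x)\neq f(y)$, that is, $\neg(xEy)$. Thus $Q$ is a perfect set of pairwise $E$-inequivalent points.

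This proves the dichotomy inside every perfect $P$.

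For the final assertion, take $E$ smooth Borel. By definition of smoothness, there is a Borel map $f:2^\omega\to2^\omega$ (or into some Polish space) with $xEy\iff f(x)=f(y)$. The target is a separable metric space, so the first part applies. Such a reduction can be fixed explicitly from a code, so no choice is needed to select $f$. In fact, the argument uses no Borelness at all, since \Cref{thm:constant-embedding} accepts arbitrary $f$; the only requirement is that some reduction to equality on a separable metric space exists.

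There is no substantial obstacle. The only point to check is that ``smooth'' is read as admitting a reduction $f$ to equality on a separable metric space. If smoothness is instead defined via a countable separating family of Borel sets $\langle B_n\rangle$, then set $f(x)=\langle\chi_{B_n}(x):n<\omega\rangle\in2^\omega$. This $f$ reduces $E$ to equality because the family separates classes, so the first part applies as before.
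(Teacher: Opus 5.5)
Your proposal is correct and follows the same approach as the paper, which states this corollary without a separate proof. It is a direct reading of \Cref{thm:constant-embedding}: the constant alternative gives a perfect subset of one $E$-class, injectivity in the embedding alternative gives pairwise inequivalence, and smoothness supplies the required reduction of $E$ to equality on a separable metric space.
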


\begin{corollary}\label{cor:wo-codomain}
Assume $\mathsf{DC}+\TP+\BCTWO$.  If $Y$ is a well-orderable separable
metric space, then every map $f:2^\omega\to Y$ is constant on a perfect
subset of every prescribed perfect set.
\end{corollary}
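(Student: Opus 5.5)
The plan is to apply \Cref{thm:constant-embedding} and then exclude the embedding alternative using well-orderability. Fix a perfect $P\subseteq2^\omega$ and an arbitrary $f:2^\omega\to Y$. By \Cref{thm:constant-embedding}, there is a perfect $Q\subseteq P$ such that $f\restr Q$ is either constant or a topological embedding. In the first case we are done. It remains to show that the embedding case cannot occur.

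Suppose $f\restr Q$ were a topological embedding, and in particular injective. Then $f[Q]\subseteq Y$ is a subset of a well-orderable set, so it is well-orderable: we restrict the given well-order of $Y$. Because $f\restr Q$ is a bijection of $Q$ onto $f[Q]$, pulling this well-order back along $f\restr Q$ well-orders $Q$. But $Q$ is a nonempty perfect subset of $2^\omega$, and \Cref{lem:continuum-not-wo} (which uses $\BCTWO$) says that no such set is well-orderable. This contradiction rules out the embedding case, so $f$ is constant on the perfect set $Q\subseteq P$.

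No step here is a real obstacle; the substantive fusion work is already contained in \Cref{thm:constant-embedding}. The only point to check is that the argument needs injectivity alone, not the full embedding property. Both the restriction of the well-order to $f[Q]$ and its pullback along $f\restr Q$ are explicit definitions, so no choice is used at this stage. The hypotheses $\mathsf{DC}+\TP+\BCTWO$ enter only through \Cref{thm:constant-embedding} and \Cref{lem:continuum-not-wo}.
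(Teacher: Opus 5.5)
Your proposal is correct and is essentially the paper's own proof. Both apply \Cref{thm:constant-embedding} and rule out the embedding alternative because an injection of a perfect set into the well-orderable $Y$ would well-order that perfect set, contradicting \Cref{lem:continuum-not-wo}.
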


\begin{proof}
The embedding alternative in \Cref{thm:constant-embedding} would inject a
perfect set into a well-orderable set and hence well-order that perfect set,
contrary to \Cref{lem:continuum-not-wo}.
\end{proof}

We finish the section by relating to Sacks forcing. Let $\mathbb S$ be Sacks forcing, viewed as the nonempty perfect subsets of $2^\omega$ ordered by inclusion, and denote
\[
 \mathbb Q_{\WO}=
 \bigl(\mathcal P(2^\omega)\smallsetminus\WO,\leq_{\WO}\bigr),
 \ \text{where }
 A\leq_{\WO}B\Longleftrightarrow A\smallsetminus B\in\WO,
\]
where $\WO$ denotes the ideal of well-orderable subsets of $2^\omega$.

\begin{lemma}\label{lem:perfect-quotient}
Assume $\TP+\BCTWO$.  If $P,Q\subseteq2^\omega$ are perfect, then
\[
 P\leq_{\WO}Q\quad\Longleftrightarrow\quad P\subseteq Q.
\]
Moreover, $P$ and $Q$ are compatible in $\mathbb Q_{\WO}$ if and only if
they have a common perfect subset.
\end{lemma}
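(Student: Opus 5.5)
The plan is to reduce both claims to \Cref{lem:continuum-not-wo} together with $\TP$, much as in the proof of \Cref{thm:s-measurable}. For the first claim, the direction $P\subseteq Q\Rightarrow P\leq_{\WO}Q$ is immediate, since $P\smallsetminus Q=\varnothing$ is well-orderable. For the converse, suppose $P\not\subseteq Q$ and pick $x\in P\smallsetminus Q$. Because $Q$ is closed, there is a basic clopen set $[s]$ with $x\in[s]$ and $[s]\cap Q=\varnothing$. Then $P\cap[s]$ is a nonempty relatively open subset of the perfect set $P$, so it is itself a nonempty perfect set (closed, with no isolated points). It is also contained in $P\smallsetminus Q$. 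By \Cref{lem:continuum-not-wo}, $P\cap[s]$ is not well-orderable, and so neither is its superset $P\smallsetminus Q$, since any subset of a well-orderable set is well-orderable. Hence $P\not\leq_{\WO}Q$. Note that this direction uses only $\BCTWO$.

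For the compatibility claim, I would first fix the meaning of compatibility in $\mathbb Q_{\WO}$: $P$ and $Q$ are compatible if there is a non-well-orderable $R$ with $R\leq_{\WO}P$ and $R\leq_{\WO}Q$.

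If $P$ and $Q$ have a common perfect subset $R$, then $R$ is non-well-orderable by \Cref{lem:continuum-not-wo}. Moreover $R\smallsetminus P=R\smallsetminus Q=\varnothing$, so $R$ is a common extension.

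Conversely, let $R$ be a common extension. Split $R$ as
\[
 R=(R\cap P\cap Q)\cup(R\smallsetminus P)\cup(R\smallsetminus Q).
\]
The last two pieces are well-orderable by assumption. If $R\cap P\cap Q$ were also well-orderable, then $R$ would be a union of three well-orderable sets. Such a union is well-orderable in $\mathsf{ZF}$: fix one well-order of each piece, then order by the least index of the piece containing the point, followed by that piece's order, as in \Cref{prop:wellordered-unions}. Only finitely many fixed choices are involved, so no choice principle is needed. This would contradict $R\notin\WO$. Hence $R\cap P\cap Q\subseteq P\cap Q$ is not well-orderable, and $\TP$ yields a perfect subset of it, which is a common perfect subset of $P$ and $Q$.

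There is no real obstacle here. The only points needing care are that a nonempty relatively clopen piece of a perfect set is perfect, and that a finite union of well-orderable sets is well-orderable without invoking $\mathsf{AC}_{\WO}$. Both are routine.
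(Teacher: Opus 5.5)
Your proof is correct and follows the paper's argument essentially verbatim. A clopen neighborhood $[s]$ of a point of $P\smallsetminus Q$ gives a perfect, hence by \Cref{lem:continuum-not-wo} non-well-orderable, subset of $P\smallsetminus Q$; for compatibility, a common extension splits into two well-orderable pieces plus a piece inside $P\cap Q$, to which $\TP$ applies (you apply $\TP$ to $R\cap P\cap Q$ rather than to $P\cap Q$, which is only a cosmetic difference).
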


\begin{proof}
Only the forward implication in the first assertion is nontrivial.  If
$x\in P\smallsetminus Q$, choose a basic neighborhood $[s]$ of $x$ disjoint from
the closed set $Q$.  The nonempty relatively open set $P\cap[s]$ contains a
perfect subset, contradicting $P\smallsetminus Q\in\WO$.

A common perfect subset is a common positive extension.  Conversely, suppose
that a positive set $A$ satisfies $A\leq_{\WO}P,Q$.  Then
\[
 A\smallsetminus(P\cap Q)\subseteq(A\smallsetminus P)\cup(A\smallsetminus Q)
\]
is well-orderable.  If $P\cap Q$ were also well-orderable, then $A$ would be
well-orderable, a contradiction.  Thus $P\cap Q$ is positive, and $\TP$
gives a perfect subset of the intersection.
\end{proof}

\begin{theorem}[Sacks quotient]\label{thm:sacks-quotient}
Assume $\TP+\BCTWO$.  The map $P\mapsto[P]_{\WO}$ is a dense embedding of
Sacks forcing into the separative quotient of $\mathbb Q_{\WO}$.  Hence
$\mathbb Q_{\WO}$ is forcing-equivalent to Sacks forcing.
\end{theorem}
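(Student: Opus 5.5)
We will check the three ingredients of a dense embedding into the separative quotient: order preservation, preservation of incompatibility (in both directions), and density. All of them come from \Cref{lem:perfect-quotient} together with $\TP$.

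First, perfect sets are not well-orderable by \Cref{lem:continuum-not-wo}, so every $P\in\mathbb S$ is a condition of $\mathbb Q_{\WO}$ and the map is well defined.

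Recall that the separative quotient of $\mathbb Q_{\WO}$ identifies $A$ and $B$ when they have the same compatible conditions. It is ordered by $[A]\le[B]$ iff every extension of $A$ is compatible with $B$. To control this order we prove a small claim: for positive $A,B$, the relation $[A]\le[B]$ in the separative quotient holds iff $A\leq_{\WO}B$.
\begin{itemize}
\item The direction from $A\leq_{\WO}B$ is trivial.
\item Conversely, suppose $A\smallsetminus B$ is positive. Then $A\smallsetminus B$ is itself an extension of $A$, since $(A\smallsetminus B)\smallsetminus A=\varnothing$. It is incompatible with $B$: any common extension $C$ would have $C\smallsetminus(A\smallsetminus B)$ and $C\smallsetminus B$ both well-orderable. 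Since $C\subseteq (C\smallsetminus(A\smallsetminus B))\cup(C\smallsetminus B)$, the union is well-orderable by the argument of \Cref{prop:wellordered-unions} with two pieces (plain $\mathsf{ZF}$ suffices for a union of two well-orderable sets). So $C\in\WO$, a contradiction.
\end{itemize}
Hence $\mathbb Q_{\WO}$ is essentially separative.

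With the claim in hand, \Cref{lem:perfect-quotient} gives $P\subseteq Q$ iff $[P]\le[Q]$. So the map is an order embedding, and it is injective on perfect sets. It also gives that $P$ and $Q$ are compatible in $\mathbb S$ iff they are compatible in $\mathbb Q_{\WO}$. Compatibility in $\mathbb Q_{\WO}$ coincides with compatibility of the classes in the separative quotient, since both mean there is a common extension. So incompatibility is preserved and reflected.

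For density, let $A$ be positive. By $\TP$ there is a perfect $P\subseteq A$, and then $P\leq_{\WO}A$ trivially, so $[P]\le[A]$.

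A dense embedding yields forcing equivalence, and $\mathbb Q_{\WO}$ is forcing-equivalent to its separative quotient. The main (mild) obstacle is the separativity claim, namely the step that two well-orderable sets have a well-orderable union. Everything else is immediate from \Cref{lem:perfect-quotient}.
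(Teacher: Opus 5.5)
Your proof is correct and follows the paper's route: $\TP$ makes the perfect conditions dense, and \Cref{lem:perfect-quotient} transfers the order and incompatibility. Your extra claim that $\leq_{\WO}$ is already separative (witnessed by $A\smallsetminus B$, and using that $\WO$ is closed under binary unions in $\mathsf{ZF}$) is left implicit in the paper; it justifies identifying the separative-quotient classes with the classes $[P]_{\WO}$ modulo $\WO$, and it shows that the map is injective and order-reflecting.
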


\begin{proof}
Every positive set is non-well-orderable and therefore contains a perfect
subset, so perfect conditions are dense.  By \Cref{lem:perfect-quotient}, the
quotient order on perfect conditions is inclusion and incompatibility agrees
with Sacks incompatibility.  Passing to the separative quotient gives the
dense embedding.
\end{proof}

\section{Further consequences and questions}\label{sec:conclusion}

The smallness characterization immediately yields the positive-outer-measure consequence that motivated the paper.

\begin{theorem}\label{thm:positive-measure}
Let $\mu$ be any atomless Borel probability measure on $2^\omega$ in $M$.
If $A\subseteq2^\omega$ satisfies $\mu^*(A)>0$, then $A$ contains a perfect subset.
In particular, every set of positive Lebesgue outer measure contains a perfect subset.
\end{theorem}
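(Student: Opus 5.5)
The plan is to argue by contraposition, passing through the smallness characterization of \Cref{thm:smallness}. By \Cref{thm:model-principles}, $M$ satisfies $\mathsf{ZF}+\mathsf{DC}+\TP+\BCTWO$, so the equivalences of \Cref{thm:smallness} (equivalently \Cref{cor:smallness-model}) are available inside $M$. Suppose $A\subseteq2^\omega$ contains no perfect subset. Then clause (6) holds, so by the equivalence (5)$\Leftrightarrow$(6) we get $A\in\UN$. By the definition of $\UN$, $\mu^*(A)=0$ for every atomless Borel probability measure $\mu$ on $2^\omega$ in $M$. Taking contrapositives, $\mu^*(A)>0$ forces $A$ to contain a perfect subset.

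It is worth tracing the chain of implications actually used, so that nothing beyond $\mathsf{DC}$ is hidden. The argument runs as follows.
\begin{itemize}
\item No perfect subset implies well-orderable, by $\TP$.
\item Well-orderable implies Rothberger, by \Cref{prop:rothberger}, which uses $\mathsf{DC}$ for \Cref{lem:countable-subcover} together with \Cref{cor:cover-selection}, ultimately resting on $\BCTWO$ via \Cref{prop:uniform-sections}.
\item Rothberger implies strong measure zero, by \Cref{lem:roth-sn}.
\item Strong measure zero implies universally null, by \Cref{lem:sn-un}, again a $\mathsf{DC}$ argument.
\end{itemize}
The definition of outer measure $\mu^*$ used in \Cref{lem:sn-un} is the infimum of measures of countable covers by basic clopen sets. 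This is the natural notion in $\mathsf{ZF}+\mathsf{DC}$, and I would state it explicitly so that ``$\mu^*(A)>0$'' means the same thing as in that lemma.

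For the ``in particular'' clause, I note that Lebesgue measure on $2^\omega$, the usual product measure, is an atomless Borel probability measure lying in $M$, indeed in $L$. If one wants Lebesgue measure on $\R$ rather than on $2^\omega$, I would add a short transfer step. Restrict to a unit interval where $A$ has positive outer measure; by countable subadditivity some $[n,n+1]$ works. Pull back along the standard binary-expansion map $2^\omega\to[0,1]$, which is continuous, measure-preserving, and injective off a countable set. A perfect subset of the preimage $B\subseteq2^\omega$ has perfect image, or at least its image contains a perfect set, since the map is finite-to-one and continuous. Alternatively, apply the abstract result directly with a perfect Polish space in place of $2^\omega$, using \Cref{lem:polish-coding}.

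The main obstacle, mild as it is, is this last bookkeeping step for $\R$: checking that the image of a perfect set under the at-most-two-to-one binary map still contains a perfect set. I would handle it by shrinking the perfect set to avoid the countably many dyadic rationals' preimages. That is possible since a perfect set minus a countable set still contains a perfect set, by a direct fusion in $\mathsf{ZF}+\mathsf{DC}$. On that set the map is injective and continuous, hence an embedding by compactness.
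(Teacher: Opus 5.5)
Your proposal is correct and is essentially the paper's argument run in contrapositive form. The paper assumes $\mu^*(A)>0$, uses \Cref{thm:smallness} (well-orderable implies $\UN$) to conclude that $A$ is not well-orderable, and then invokes $\TP$; this is your chain read backwards. Your transfer to Lebesgue measure on $\R$ is a correct extra that the paper does not carry out, since it reads the ``in particular'' clause as the product measure on $2^\omega$.
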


\begin{proof}
If $A$ were well-orderable, \Cref{thm:smallness} would make it universally null, contradicting $\mu^*(A)>0$.
Hence $A$ is not well-orderable, so by \Cref{thm:truss} it contains a perfect subset.
\end{proof}

\begin{corollary}\label{cor:equiconsistency}
The following theories are equiconsistent:
\begin{enumerate}
\item $\mathsf{ZFC}$;
\item $\mathsf{ZF}+\mathsf{DC}$ together with the assertion that every set of positive Lebesgue outer measure contains a perfect subset.
\end{enumerate}
\end{corollary}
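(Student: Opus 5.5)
The equiconsistency has two directions, and I would handle them separately.

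For (2)$\Rightarrow$(1): any model of $\mathsf{ZF}+\mathsf{DC}$ has the inner model $L$, which satisfies $\mathsf{ZFC}$. So $\mathrm{Con}$ of theory (2) implies $\mathrm{Con}(\mathsf{ZF})$, and Gödel's constructible-universe argument gives $\mathrm{Con}(\mathsf{ZFC})$. This direction uses nothing about outer measure; it is just the relative consistency of $\mathsf{ZFC}$ over $\mathsf{ZF}$.

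For (1)$\Rightarrow$(2): I would build Truss's model $M$ from a model of $\mathsf{ZFC}$ and then quote what the paper has already proved about it.

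\begin{enumerate}
\item Start from a model of $\mathsf{ZFC}$ and pass to its constructible universe $L$. This satisfies $\mathsf{ZFC}+V=L$ and has the same consistency strength.
\item Over $L$, form the forcing extension by $\mathbb P=\Fn(\omega_1\times\omega,\omega,{<}\omega)$ and the symmetric submodel $M=\mathfrak N_{\aleph_1}$. Only the symmetric-model construction is needed, so this can be formalized in the usual syntactic way (Boolean-valued or countable transitive model of a finite fragment). No large cardinal hypothesis enters anywhere.
\item By \Cref{thm:truss}(1), $M\models\mathsf{ZF}$.
\item By \Cref{prop:dc}, $M\models\mathsf{DC}$.
\item By \Cref{thm:positive-measure}, applied to Lebesgue measure on $2^\omega$, every set of positive Lebesgue outer measure in $M$ contains a perfect subset.
\item To state the result for $\R$ rather than $2^\omega$, transfer along the standard Borel isomorphism between $2^\omega$ with the coin-flipping measure and $[0,1]$ with Lebesgue measure, which is measure-preserving off a countable set. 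A perfect subset of $2^\omega$ avoiding the countable exceptional set maps to a set containing a perfect subset of $\R$; shrink to a perfect subset if needed. A set of positive outer measure in $\R$ has a bounded part of positive outer measure, so nothing more is required.
\end{enumerate}

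The main obstacle is metamathematical rather than mathematical: converting "there is a model of $\mathsf{ZF}+\mathsf{DC}+\ldots$ obtained by symmetric forcing over $L$" into a relative consistency statement. I would handle this in the standard way. Any finite set of axioms of theory (2) is forced, in the symmetric sense, by finitely many axioms of $\mathsf{ZFC}$ true in $L$. A contradiction in (2) would therefore yield a contradiction in $\mathsf{ZFC}$, and the implication is provable in a weak metatheory.

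I would also note that the measure-theoretic transfer in the last step must be choice-free. The Borel isomorphism is explicit, so this is fine.
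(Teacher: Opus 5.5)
Your proposal is correct and follows the paper's route. The paper's proof cites only \Cref{thm:model-principles} and \Cref{thm:positive-measure} for the forward direction in Truss's model, exactly as in your steps (3)--(5), and leaves implicit the trivial reverse direction via $L$ that you supply. Your additions are routine details the paper omits, namely the metamathematical formalization and the choice-free transfer from $2^\omega$ to $\R$, where the countable subadditivity of outer measure you use is available in $M$ thanks to $\mathsf{DC}$.
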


\begin{proof}
The Truss construction gives the forward consistency implication by \Cref{thm:model-principles,thm:positive-measure}.
\end{proof}

For comparisons with other perfect-set principles in models without choice, see Di Prisco and Galindo \cite{DiPriscoGalindo2010}.
The results above are restricted in two respects: the index-set characterizations concern subsets of $2^\omega$, and \Cref{prop:not-hurewicz} shows that the equivalence of the smallness properties considered here does not extend to the Hurewicz property.

We leave the following questions open.

\begin{question}
Over $\mathsf{ZF}+\mathsf{DC}$, what are the implication and consistency relations among $\TP$, $\BCTWO$, and the assertion $\WO=\SN$?
Do natural combinations of these statements imply a nontrivial fragment of choice such as $\mathsf{AC}_{\WO}$?
\end{question}

\begin{question}
Which other initial-segment symmetric models satisfy $\TP$, $\BCTWO$, or both?
Is there a forcing-theoretic condition on the coordinate forcing which implies $\RSWO$, and a separate condition which implies the perfect-set dichotomy?
\end{question}

\begin{question}
What are the Hurewicz subsets of Truss's model?
More generally, which selection properties adjacent to the Rothberger and Hurewicz properties in the Scheepers diagram admit an intrinsic description there?
\end{question}

\end{document}